\begin{document}


\title{Smooth Combinatorial Cubes are IDP}
\author{Juliana Curtis}

\begin{abstract}
    Tadao Oda conjectured that every smooth polytope has the Integer Decomposition Property. In this paper, we show this result for a subclass of polytopes: smooth combinatorial cubes of any dimension. 
\end{abstract}

\maketitle


\section{Introduction}

For lattice polytopes $P$ and $Q$, we say that $(P,Q)$ has the \textit{Integer Decomposition Property}, or that it \textit{is} \textit{IDP}, if every lattice point in the Minkowski sum $P + Q = \{p + q : p \in P, q \in Q\}$ can be written as the sum of a lattice point in $P$ and a lattice point in $Q$. For a single polytope $P$, we say that $P$ is \textit{IDP} when $(P, kP)$ is IDP for all positive integers $k$. IDP polytopes are directly related to Ehrhart theory, and are of great interest in commutative algebra and the study of toric varieties, as well as being of use in integer programming. In general, it is an open question to characterize when a polytope is IDP. 

While being IDP is a global property of a polytope, we are interested in its relationship to the more local notion of smoothness. First, a $d$-dimensional polytope is \textit{simple} if each vertex is contained in exactly $d$ edges (and so also exactly $d$ facets). We define the \textit{primitive edge directions} of a vertex to be the smallest lattice directions along all its incident edges, and then say that a $d$-dimensional polytope is \textit{smooth} if it is simple and if the primitive edge directions at each vertex form a basis for the integer lattice $\Z^d$. 

In 1997, Tadao Oda made the following conjecture, documented in \cite{oda2008problems}, which remains unproven.

\begin{conjecture} [Oda's Conjecture]
    All smooth polytopes are IDP.
\end{conjecture}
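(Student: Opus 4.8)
The plan is to pass to the lattice-covering and algebraic reformulations of IDP and attack Oda's Conjecture globally, using the smooth combinatorial cube result as the local model at every vertex. Since $P$ is IDP exactly when $(P,kP)$ is IDP for all $k$, and since the cube theorem already settles the case in which $P$ is combinatorially a cube, the first move is to reduce a general smooth $P$ to its vertex-local structure. At each vertex $v$ the primitive edge directions form a $\Z$-basis, so a neighborhood of $v$ inside $P$ is unimodularly equivalent to a neighborhood of the origin in the standard positive orthant; thus every vertex corner of $P$ looks exactly like the corner of a unit cube. The goal is to promote this uniform local smoothness into a global unimodular cover of $P$, since possessing a unimodular cover is known to imply IDP.

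First I would set up a reduction in levels. By the classical dilation theorems for lattice polytopes (Ewald--Wessels, Bruns--Gubeladze--Trung), the dilate $cP$ is IDP whenever $c \ge d-1$, so any failure of IDP for a smooth $d$-polytope is confined to the low levels $k \le d-2$. This bounds the problem: it suffices to show that every lattice point of $(k+1)P$ splits as a lattice point of $P$ plus one of $kP$ for the finitely many small $k$, uniformly in the dimension. I would attempt a descent that uses the smooth corners to peel a lattice point of $P$ off any point of $(k+1)P$ lying near the boundary, lowering its level while keeping a lattice point of $kP$, and then iterate inward. The cube result enters here as the guarantee that this peeling is always available along any face that is combinatorially a cube, in particular along every vertex star.

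In parallel I would carry the cohomological reformulation as both a check and a second line of attack: a smooth $P$ corresponds to a smooth polarized toric variety $(X,L)$ with $L$ very ample, and IDP is the projective normality of $L$, i.e.\ the surjectivity of the multiplication maps $H^0(X,L)\otimes H^0(X,kL)\to H^0(X,(k+1)L)$ for all $k$. By the standard kernel-bundle technique this follows from the vanishing $H^1(X,M_L\otimes kL)=0$, where $M_L$ is defined by $0\to M_L\to H^0(X,L)\otimes\mathcal{O}_X\to L\to 0$; on a smooth toric variety one then hopes to evaluate this $H^1$ combinatorially through toric vanishing and to feed the bounded range of levels from the dilation theorems into a finite check.

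I expect the main obstacle to be exactly the global gluing step, and this is precisely where Oda's Conjecture is genuinely open. Local unimodularity at the vertices does not by itself force a global unimodular cover: the interior of $P$ can contain lattice regions not reachable by unimodular cells grown from any corner, and for general (non-smooth) lattice polytopes such obstructions really occur, so smoothness must be exploited as a global hypothesis rather than merely vertex by vertex. On the cohomological side the same difficulty reappears as the inaccessibility of $H^1(X,M_L\otimes kL)$: the bundle $M_L$ is not a line bundle, so toric Bott vanishing does not apply directly, and decomposing it amounts to understanding the syzygies of the section ring, which is no easier than the original statement. Bridging local smoothness to global decomposition, whether combinatorially through a unimodular cover or cohomologically through control of the kernel bundle, is the crux that I do not expect to resolve by routine means.
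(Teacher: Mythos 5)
You have not given a proof, and to be clear, the paper does not contain one either: the statement you are addressing is Oda's Conjecture itself, which the paper explicitly records as open and of which it proves only the special case of smooth combinatorial cubes (Corollary \ref{CorSmoothCubesAreIDP}). Your proposal is a research plan with an honestly acknowledged hole, and the hole is the entire content of the conjecture. Both of your lines of attack defer the decisive step: combinatorially, you need to pass from unimodular corners at the vertices to a global unimodular cover (or at least to your ``peeling'' descent), and cohomologically you need the vanishing of $H^1(X, M_L \otimes kL)$, and you concede that neither is within reach. Since nothing in the proposal closes that gap, it does not establish the statement.

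Beyond the admitted crux, two specific steps would fail as written. First, your claim that the cube theorem ``guarantees that this peeling is always available along any face that is combinatorially a cube, in particular along every vertex star'' misuses the paper's result: the vertex star of a general smooth polytope is not a lattice combinatorial cube, and the paper's argument is irreducibly global --- it runs an induction on slices (Lemma \ref{TheoremRecursiveIDP}) that depends on the existence of two parallel facets (Theorem \ref{TheoremMain2ParallelFaces}), a structural fact special to combinatorial cubes and false for general smooth polytopes (a smooth simplex has no parallel facets). The local model at a vertex being a unit-cube corner is nothing more than the definition of smoothness, so this observation carries no new leverage. Second, the reduction via Proposition \ref{PropIDPBasics}(a) confines failure to levels $k \le d-2$, but since $d$ is unbounded this is not ``a finite check''; the number of problematic levels grows with dimension, and no uniform-in-$d$ mechanism is proposed. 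On the cohomological side you correctly note that $M_L$ is not a line bundle and Bott-type vanishing does not apply, which is exactly why that reformulation has not resolved the conjecture; restating the problem there is not progress toward a proof.
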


This problem is motivated by its relationship to the study of toric varieties, as there is a correspondence between smooth polytopes and ample divisors of smooth toric varieties. Suppose that $X_\Sigma$ is a smooth projective toric variety, and that $\mathcal{L}$ is an ample line bundle on it. Oda's Conjecture is equivalent to the statement that the embedding of $X_\Sigma$ given by $\mathcal{L}$ is projectively normal, or in other words, that the multiplication map
$$
    H^0 ( X_\Sigma, \mathcal{L}) \otimes ... \otimes H^0 ( X_\Sigma, \mathcal{L}) \to 
    H^0 ( X_\Sigma, \mathcal{L}^{\otimes k}),
$$
is surjective (\cite{hibi2008mini}).

The interest in the consequences of smoothness is not limited to the IDP, but includes stronger properties such as the existence of a unimodular covering or triangulation.  Indeed, there is a hierarchy of properties, cataloged in \cite{Haase_2021}, of which the IDP is the weakest. And despite attracting considerable interest, including as the subject of an Oberwolfach mini-workshop in 2007, Oda's conjecture remains open, even in three dimensions. As such, even partial or computational results relating to any such properties are of interest, as in \cite{gubeladze2012convex}, \cite{firla1999hilbert}, \cite{Lundman_2012}, \cite{Bogart_2015}, and \cite{haase2010generating}.

In particular, recent progress was made towards proving the conjecture in \cite{2019BecksmoothCS3D}, where Beck et al.\ showed that 3-dimensional, centrally symmetric, smooth polytopes are IDP. We define a $d$-dimensional \textit{combinatorial cube} to be a polytope whose face poset is in bijection with the face poset of the unit cube, $[0,1]^d$. Then, we prove the following.

\newtheorem*{ThmSmoothCubesAreIDP_statedEarly}{Theorem \ref{CorSmoothCubesAreIDP}}
\begin{ThmSmoothCubesAreIDP_statedEarly}
    Smooth combinatorial cubes of any dimension are IDP.
\end{ThmSmoothCubesAreIDP_statedEarly}


\section{Preliminaries}

\subsection{Properties of smooth, IDP, and Minkowski equivalent polytopes} 

In this paper, we will explore the structure imposed on polytopes when they are smooth; in particular, we will be concerned with whether two faces of a polytope are parallel. In general, every $k$-face $F$ of a polytope is parallel to a unique $k$-dimensional linear subspace, $\lin(F)$, and then two faces $F$ and $G$ of a polytope are \textit{parallel} when $\lin(F) = \lin(G)$. 

This additional structure evident in certain classes of smooth polytopes will allow us to consider the integer decomposition property. We will employ the following well-known facts.

\begin{proposition}[\cite{cox2011toric}, \cite{viet1997normal}, \cite{koelman1993generators}] \label{PropIDPBasics} 
    Basic IDP properties:
    \begin{enumerate} 
        \item [(a)] \label{PropIDPBasics_a} Let $P$ be a $d$-dimensional polytope. Then, $(P, kP)$ is IDP for all integers $k \geq d-1$. 
        \item [(b)] All polygons are IDP. 
    \end{enumerate}
\end{proposition}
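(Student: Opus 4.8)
The plan is to prove (a) by a rounding argument on a lattice triangulation, and then obtain (b) as the two-dimensional instance of (a) (with an independent, self-contained alternative). For (a), set $n := k+1$, so that, since $P + kP = nP$ by convexity, the claim is that every lattice point of $nP$ with $n \ge d$ is the sum of a lattice point of $P$ and a lattice point of $(n-1)P$. First I would fix a triangulation of $P$ into lattice simplices whose vertices lie in $P \cap \Z^d$. Given $z \in nP \cap \Z^d$, the point $z/n$ lies in $P$, hence (by Carath\'eodory) in some full-dimensional simplex $\sigma = \mathrm{conv}(v_0, \ldots, v_d)$ of the triangulation; I then write $z = \sum_{i=0}^d c_i v_i$ with $c_i \ge 0$ and $\sum_i c_i = n$.

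The idea is to round the coefficients down. Set $a_i = \lfloor c_i \rfloor$. Since $\sum_i c_i = n$ is an integer and the $d+1$ fractional parts $c_i - a_i$ each lie in $[0,1)$, their sum is an integer in $\{0, 1, \ldots, d\}$, so $\sum_i a_i \ge n - d \ge 0$. In the generic case $\sum_i a_i \ge 1$, some $a_j \ge 1$, so $c_j \ge 1$; taking $x = v_j \in P \cap \Z^d$, the difference $z - x = (c_j - 1)v_j + \sum_{i \ne j} c_i v_i$ has nonnegative coefficients summing to $n-1$, hence lies in $(n-1)\sigma \subseteq (n-1)P$ and is a lattice point. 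This already settles the case $n \ge d+1$, i.e. $k \ge d$.

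The main obstacle is the sharp boundary case $k = d-1$, i.e. $n = d$, in which it can happen that every $c_i < 1$ (so $\sum_i a_i = 0$) and no vertex of $\sigma$ can be peeled off while keeping all coefficients nonnegative; indeed, for each $j$ the candidate $z - v_j$ acquires a negative coefficient on $v_j$, so it need not lie in $(n-1)\sigma$. Resolving exactly this extremal dilate is the crux, and I flag it as the main difficulty. I expect to handle it by the sharper argument (due to Ewald--Wessels, and Bruns--Gubeladze--Trung): an induction on the dimension $d$ that reduces the problematic point to a facet of $P$, a polytope of dimension $d-1$ to which the inductive hypothesis applies, rather than by a single peeling step.

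Finally, (b) follows at once: a polygon is a lattice polytope of dimension $d = 2$, and for $d = 2$ the bound $k \ge d - 1$ in (a) reads $k \ge 1$, covering every positive integer $k$, which is precisely the statement that $P$ is IDP. Alternatively, and avoiding the delicate boundary case entirely, I would note that every lattice polygon admits a triangulation into empty lattice triangles, each of which has area $\tfrac12$ by Pick's formula and is therefore unimodular; for a unimodular simplex $\sigma = \mathrm{conv}(v_0,\ldots,v_d)$ the lifted vertices $(v_i,1)$ generate the height-graded semigroup of the cone over $\sigma$, so any lattice point at height $m$ is a nonnegative-integer combination of them, whence every lattice point of $mP$ is a sum of $m$ lattice points of $P$. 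This self-contained route both proves (b) and explains why the extremal case of (a) is benign in dimension two.
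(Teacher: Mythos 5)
The paper never proves this proposition---it is quoted from the literature with citations to \cite{cox2011toric}, \cite{viet1997normal}, and \cite{koelman1993generators}---so your attempt can only be compared against those standard proofs, and your outline is indeed the standard one. Your rounding-and-peeling argument is correct and complete for $k \ge d$: writing $z = \sum_{i=0}^d c_i v_i$ in a maximal simplex of a lattice triangulation, the floor coefficients sum to at least $n - d \ge 1$, some $c_j \ge 1$, and $z - v_j$ is a lattice point of $(n-1)\sigma \subseteq (n-1)P$. Both of your routes to (b) are also sound: the deduction from (a) at $d = 2$, and the self-contained argument via a triangulation into empty lattice triangles, which have area $\tfrac12$ by Pick's theorem and are hence unimodular, so peeling a vertex works at every height. (The appeal to Carath\'eodory is unnecessary---a triangulation covers $P$, so $z/n$ lies in some maximal simplex---but this is harmless.)

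The one genuine gap is the sharp case $k = d-1$, which you correctly identify as the crux but then defer with an inaccurate description of how the literature handles it: the Ewald--Wessels/Bruns--Gubeladze--Trung resolution is not an induction on dimension reducing to a facet, but a small refinement of exactly your setup. Take the triangulation to be \emph{full}, i.e., using every point of $P \cap \Z^d$ as a vertex (such triangulations always exist, e.g., by pulling), so that each maximal simplex $\sigma = \mathrm{conv}(v_0, \ldots, v_d)$ contains no lattice points other than its vertices. In your extremal configuration ($n = d$, all $c_i < 1$, $\sum_i c_i = d$), consider
$$
    u := v_0 + \cdots + v_d - z = \sum_{i=0}^{d} (1 - c_i)\, v_i .
$$
This is a lattice point whose barycentric coordinates $1 - c_i$ lie in $(0,1]$ and sum to $(d+1) - d = 1$, so $u \in \sigma$; emptiness of $\sigma$ forces $u$ to be a vertex $v_j$, and uniqueness of barycentric coordinates then gives $c_i = 1$ for all $i \ne j$, contradicting $c_i < 1$. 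So the extremal case simply cannot occur for a full triangulation, your peeling step always applies, and (a) is complete with no dimension induction at all. With that one paragraph added, your proposal becomes a full proof matching the cited sources; as written, it proves (a) only for $k \ge d$ and (b) unconditionally.
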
 

We also have the following useful characterization of being IDP, which we use repeatedly.

\begin{proposition} [IDP Equivalence] \label{PropIDPEquiv}
    Let $P$ and $Q$ be polytopes. For a lattice point $a \in \Zd$, define
    $$
        R_a = P \cap (a + (-Q)).
    $$
    Then, $(P, Q)$ is IDP if and only if for all $a$, $R_a$ contains a lattice point whenever it is nonempty.
\end{proposition}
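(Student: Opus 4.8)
The plan is to prove both implications directly from the definitions, with the entire argument resting on a single bookkeeping observation: a lattice point of $R_a$ carries exactly the same data as a lattice decomposition $a = p + q$ with $p \in P$ and $q \in Q$. First I would record that $R_a = P \cap (a + (-Q))$ is nonempty precisely when $a \in P + Q$; indeed $x \in R_a$ means $x \in P$ and $a - x \in Q$, which is the same as writing $a = x + (a - x)$ with the two summands lying in $P$ and $Q$ respectively. Restricting to integrality, since $a \in \Zd$ by assumption, a point $p \in R_a$ is a lattice point if and only if both $p \in P \cap \Zd$ and $a - p \in Q \cap \Zd$, i.e.\ if and only if $p$ gives a lattice decomposition of $a$.

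For the forward direction, I would assume $(P,Q)$ is IDP and fix a lattice point $a$ with $R_a \neq \emptyset$. By the observation above, $a$ is then a lattice point of $P + Q$, so the IDP hypothesis furnishes $p \in P \cap \Zd$ and $q \in Q \cap \Zd$ with $a = p + q$. Then $p = a - q \in a + (-Q)$ and $p \in P$, so $p$ is a lattice point of $R_a$, which is what is required.

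For the converse, I would assume every nonempty $R_a$ contains a lattice point and let $x$ be an arbitrary lattice point of $P + Q$. Taking $a = x$, the set $R_x$ is nonempty, so by hypothesis it contains a lattice point $p$; then $p \in P \cap \Zd$ and $q := x - p \in Q \cap \Zd$, exhibiting $x = p + q$ as the desired decomposition. Hence $(P,Q)$ is IDP.

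I do not expect a genuine obstacle, since the statement is essentially a reindexing of the definition of IDP. The only point demanding care is the equivalence between a lattice point of $R_a$ and a lattice decomposition of $a$, which uses in an essential way that $a$ itself is a lattice point, so that $p$ integral forces $a - p$ integral; without this the correspondence would fail, and it is exactly why the quantifier in the statement ranges over lattice points $a$.
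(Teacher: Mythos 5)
Your proof is correct and follows essentially the same route as the paper's: both directions hinge on the observation that a point of $R_a$ encodes a decomposition $a = p + (a-p)$ with summands in $P$ and $Q$, and that integrality of $a$ makes $p$ lattice if and only if $a - p$ is. The only difference is cosmetic---you state the equivalence $R_a \neq \emptyset \iff a \in P+Q$ up front, whereas the paper carries out the same computation inline in each direction.
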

\begin{proof}
    Suppose first that $(P,Q)$ is IDP, and let $a \in \Zd$ be such that $R_a$ is nonempty. Let $y \in R_a$ and define $\tilde{q} = a - y$, so
    $$
        a = y + \tilde{q} \in P + Q.
    $$
    Since $a$ is a lattice point in $P + Q$ and $(P,Q)$ is IDP, there are lattice points $p \in P$ and $q \in Q$ such that $a = p + q$. Then, we see that 
    $$
        p = a - q \in a + (-Q),
    $$
    so $p$ is a lattice point in $P$ and $a + (-Q)$ and therefore in $R_a$.

    Conversely, suppose that for every lattice point $a$ such that $R_a$ is nonempty, $R_a$ contains a lattice point. Let $x$ be a lattice point in $P + Q$, so $x = \tilde{p} + \tilde{q}$
    for points $\tilde{p} \in P$ and $\tilde{q} \in Q$, not necessarily lattice points. Rearranging, 
    $$
        \tilde{p} = x - \tilde{q} \in P \cap (x + (-Q)) = R_x.
    $$
    Thus, $R_x$ is nonempty, so by assumption there is a lattice point $p \in P \cap (x + (-Q))$. Therefore there is a $q \in Q$ such that $p = x - q$, and since $p$ and $x$ are both lattice points, so is $q$. Thus, $x = p + q$, the sum of a lattice point in $P$ and a lattice point in $Q$, so $(P,Q)$ is IDP.
\end{proof}

In this paper, we rely heavily on the use of \textit{unimodular transformations}, which are linear maps $\Rd \to \Rd$ which send the lattice $\Z^d$ bijectively to itself. Equivalently, a linear transformation is unimodular if and only if the matrix representing it has determinant $\pm 1$ and integer entries. Importantly, for every lattice basis, there exists a unimodular transformation which sends it to the standard basis, and it follows that unimodular transformations preserve the IDP.

Every polytope $P$ is equipped with a \textit{normal fan}, $N(P)$, the collection of the normal cones of its faces. If a polytope $Q$ has the same normal fan as $P$, then $P$ and $Q$ are \textit{Minkowski equivalent}. We use the following characterization of this property.

\begin{proposition} \label{PropMEquivWhenAllFacetsParallel}
    Polytopes $P$ and $Q$ are Minkowski equivalent if and only if there is a bijection between their face posets such that all corresponding facets are parallel to each other.
\end{proposition}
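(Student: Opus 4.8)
The plan is to use the standard duality between the face poset of a polytope and the cones of its normal fan: faces correspond to cones in an inclusion-reversing fashion, and in particular facets correspond to the rays (one-dimensional cones) of the fan, each such ray being generated by the outer normal of that facet. For the forward direction, suppose $N(P) = N(Q)$. Since the faces of a polytope are in inclusion-reversing bijection with the cones of its normal fan, and $P$ and $Q$ share this fan, composing the two dualities yields a bijection of face posets (inclusion-reversing twice is inclusion-preserving). Under this bijection a facet $F$ of $P$ and its image in $Q$ both correspond to the same ray of the common fan, so their supporting hyperplanes have the same outer normal direction; hence they lie in parallel hyperplanes and are parallel, and the bijection sends $k$-faces to $k$-faces.

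For the reverse direction, write $P = \{x : \langle a_j, x\rangle \leq b_j\}$, where $a_j$ is the outer normal of the facet $F_j$, so that the normal cone of a face $F$ is $\operatorname{cone}(a_j : F \subseteq F_j)$; write $Q$ analogously with outer normals $c_j$ for the corresponding facet $\phi(F_j)$. Because $\phi$ preserves incidences and corresponding facets are parallel, each $c_j$ is a scalar multiple of $a_j$, and the normal cone of $\phi(F)$ in $Q$ is $\operatorname{cone}(c_j : F \subseteq F_j)$. The reduction I would make is this: if each $c_j$ is a \emph{positive} multiple of $a_j$, then these two cones coincide for every face $F$, and therefore $N(P) = N(Q)$. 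Equivalently, under that hypothesis $Q$ is obtained from $P$ by sliding each facet hyperplane parallel to itself while preserving the combinatorial type, an operation that leaves the normal fan unchanged.

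Thus the whole reverse direction rests on an orientation step, and I expect this to be the main obstacle: parallelism of facets determines each $c_j$ only up to sign, so sign discrepancies must be excluded using more than the combinatorial data in $\phi$. My approach would be to normalize the signs so that they agree at one vertex and then propagate across the ridge-adjacency graph, using at each ridge $F_i \cap F_j$ that local convexity of $P$ and of $Q$ constrains the admissible signs of the pair $(c_i, c_j)$ against $(a_i, a_j)$, and that this graph is connected. The delicate point is controlling these signs \emph{globally}: as the central symmetry of a single cube already illustrates, a face-poset bijection can reverse orientations of corresponding facets, so the argument must genuinely invoke convexity, i.e.\ the completeness of the normal fan, rather than incidence data alone. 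I would regard pinning down this orientation consistency as carrying essentially all of the difficulty of the proposition.
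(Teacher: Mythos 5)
Your forward direction is fine, and your ``reduction'' of the reverse direction --- if each $c_j$ is a \emph{positive} multiple of $a_j$, then every normal cone of $Q$ equals the corresponding normal cone of $P$, so $N(P)=N(Q)$ --- is also correct. The gap is the sign step you defer to a ridge-propagation argument, and it is not merely difficult: it is impossible, because with the paper's unoriented notion of parallelism ($\lin(F)=\lin(G)$) the reverse implication is false. Let $P$ be the triangle with vertices $(0,0)$, $(1,0)$, $(0,1)$ and let $Q=-P$. The antipodal map $F\mapsto -F$ is an isomorphism of face posets under which every edge of $Q$ is parallel to the corresponding edge of $P$, yet $N(Q)=-N(P)\neq N(P)$; for instance, $(0,-1)$ generates a ray of $N(P)$ but not of $N(Q)$. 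So $P$ and $Q$ satisfy the right-hand side of the proposition without being Minkowski equivalent. (The cube example you raise is not of this kind: a cube is centrally symmetric, hence Minkowski equivalent to its own reflection; replacing the cube by a simplex turns your worry into a genuine counterexample.) No appeal to convexity or to connectedness of the ridge graph can force the signs to be consistent, because they genuinely need not be.

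For comparison: the paper gives no proof of this proposition at all --- it is stated as a known fact --- and in every place it is invoked, the oriented statement is what is actually needed and the orientation consistency holds automatically. For example, in Lemma \ref{LemmaTopBottomComboPrismsMinkowskiEquiv} the corresponding facets $F_T=T\cap F$ and $F_B=B\cap F$ of the top and bottom are cut out by the \emph{same} facet $F$ of the prismatoid, hence have positively proportional (not merely proportional) outer normals inside $\lin(T)=\lin(B)$. So the productive move is not to chase the signs but to strengthen the hypothesis: require that corresponding facets have the same outer normal \emph{directions}, i.e.\ that each $c_j$ is a positive multiple of $a_j$. Under that reading, your own reduction finishes the reverse direction in one line, and the forward direction stands as you wrote it.
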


There are many geometric implications of being Minkowski equivalent; in particular, we have the following.

\begin{lemma} \label{LemmaMEquivDisjointHyperplane}
    Let $P$ and $Q$ be disjoint polytopes such that $P$ and $-Q$ in $\Rd$ are Minkowski equivalent. Then, there is a hyperplane which separates them that is parallel to a facet of $P$.
\end{lemma}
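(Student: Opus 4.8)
The plan is to reformulate the disjointness of $P$ and $Q$ in terms of the single Minkowski sum $R := P + (-Q)$, and then to extract the desired separating direction from a facet of $R$. The crucial first observation is that $P$ and $Q$ are disjoint if and only if $0 \notin R$: indeed, $R = \{p - q : p \in P,\, q \in Q\}$, so $0 \in R$ exactly when some $p \in P$ coincides with some $q \in Q$. Thus disjointness translates precisely into the origin lying outside the convex polytope $R$.

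Next I would invoke the hypothesis that $P$ and $-Q$ are Minkowski equivalent. Since the normal fan of a Minkowski sum is the common refinement of the normal fans of the summands, and $N(P) = N(-Q)$ by assumption, the sum $R = P + (-Q)$ has the same normal fan as $P$; in particular $R$ and $P$ share exactly the same set of outer facet normals (the rays of their common normal fan). This is where the hypothesis is used essentially, and it explains why it must be $-Q$, not $Q$, that is Minkowski equivalent to $P$: otherwise $N(P + (-Q))$ would be a strict refinement of $N(P)$ and could contribute facet directions foreign to $P$.

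The heart of the argument is then a separation statement for a point and a polytope. Writing $R = \bigcap_u \{x : \langle u, x\rangle \le b_u\}$ as the intersection of its facet half-spaces, where $u$ ranges over the outer facet normals and $b_u = \max_{x \in R}\langle u, x\rangle$, the condition $0 \notin R$ forces $\langle u, 0\rangle = 0 > b_u$ for at least one facet normal $u$. So there is a facet direction $u$ of $R$ with $b_u < 0$, and a hyperplane with normal $u$ strictly separates $R$ from the origin. Unwinding $b_u < 0$ through $R = P + (-Q)$ yields $\max_{p \in P}\langle u, p\rangle < \min_{q \in Q}\langle u, q\rangle$, so any hyperplane $\{x : \langle u, x\rangle = c\}$ with $c$ strictly between these values separates $P$ from $Q$. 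Finally, because $u$ is a facet normal of $R$ and $R$ shares its normal fan with $P$, the vector $u$ is also an outer facet normal of $P$, so the separating hyperplane is parallel to the corresponding facet of $P$, exactly as required.

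I expect the main obstacle to be the careful justification that a point outside a convex polytope is always strictly separated by at least one \emph{facet}-supporting hyperplane, rather than merely by some generic separating hyperplane; this is precisely what the facet half-space representation of $R$ delivers, and it is what links the abstract separating-hyperplane theorem to the facet-parallelism demanded by the statement. A secondary point to verify is the full-dimensionality of $R$ (so that its normal fan is complete and its facets are genuinely of codimension one), which follows from $P$ and $-Q$ being full-dimensional polytopes with a common normal fan.
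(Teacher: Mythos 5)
Your proof is correct, but it takes a genuinely different route from the paper. The paper starts from an arbitrary separating hyperplane (via the separation theorem), writes its normal $h$ as a positive combination of extreme rays $y_1,\dots,y_k$ of the normal cone $N_F(P)$ via Carath\'eodory's theorem for cones, uses Minkowski equivalence to see that the corresponding face of $-Q$ simultaneously maximizes the same directions, and then compares the support values $a = \sum \lambda_i a_i$ and $b = -\sum \lambda_i b_i$ to extract one ray $y_j$ that already separates. You instead linearize the whole problem: disjointness becomes $0 \notin R = P + (-Q)$, the hypothesis $N(P) = N(-Q)$ collapses the common-refinement description of $N(R)$ to $N(P)$ itself, and the separating facet normal is read directly off the violated inequality in the H-description of $R$, with the support-function identity $\max_R\langle u,\cdot\rangle = \max_P\langle u,\cdot\rangle - \min_Q\langle u,\cdot\rangle$ doing the unwinding. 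The two arguments use the Minkowski-equivalence hypothesis at the same essential point (identifying a separating direction with a facet normal of $P$), but your version trades the Carath\'eodory decomposition for the standard fact that the normal fan of a Minkowski sum is the common refinement of the summands' fans; this makes the proof shorter and makes it transparent that a point outside a polytope always violates a \emph{facet} inequality, whereas the paper's version is self-contained modulo Carath\'eodory and never forms the sum $R$. One shared caveat, which you correctly flag and the paper leaves implicit: both proofs identify rays of the normal fan (respectively, extreme rays of normal cones) with facet normals, which requires $P$ to be full-dimensional in the ambient space; in the paper's later application to slices, the lemma is applied inside the hyperplane $\{x_d = 0\}$, where the slices are full-dimensional, so this is consistent.
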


\begin{proof}
    By the hyperplane separation theorem, there exists normal vector $h \in \Rd$ and real numbers $a < b$ such that 
    $$
        \max_{x \in P} \langle x, h \rangle = a
        \hspace{1cm} \text{and} \hspace{1cm} 
        \min_{x \in Q} \langle x, h \rangle = b.
    $$
    Let $F$ be the face of $P$ maximizing $h$, and $N_F(P)$ its normal cone. Then $h \in N_F(P)$, so by Carathéodory's Theorem \cite{caratheodory1911} for cones, 
    $$
        h = \la_1 y_1 + ... + \la_k y_k,
    $$ 
    where each $y_i \in N_F(P)$ is an extreme ray, each $\la_i > 0$, and $\{y_1, ..., y_k \}$ is linearly independent. By definition of normal cones, $F$ simultaneously maximizes these directions in $P$, so for each $i$ we define $a_i = \max_{x \in P} \lan x, y_i \ran$ and then get that 
    \begin{align*}
        a   &= \max_{x \in P} \; \langle x, \la_1 y_1 + ... + \la_k y_k \rangle \\
            &= \la_1 \max_{x \in P} \; \langle x, y_1 \rangle + ... + \la_k \max_{x \in P} \; \langle x, y_k \rangle \\
            &= \la_1 a_1 + ... + \la_k a_k.
    \end{align*}

    Next, we recall that $P$ and $-Q$ are Minkowski equivalent, so $N(P) = N(-Q)$. Thus there is a face $-G$ of $-Q$ such that $N_{-G}(-Q) = N_F(P)$. As $h$ is a positive linear combination of the $y_i \in N_{-G}(-Q)$, similarly $-G$ simultaneously maximizes these directions in $-Q$, so for each $i$ let $b_i = \max_{x \in -Q} \lan x, y_i \ran$. Then, we similarly get that
    \begin{align*}
        b   &= \min_{x \in Q} \langle x, h \rangle \\
            &= - \max_{x \in -Q} \langle x, h \rangle \\
            &= - (\la_1 b_1 + ... + \la_k b_k).
    \end{align*}
    Then, since $a < b$, by rearranging we see
    $$
        \la_1(a_1 + b_1) + ... + \la_k(a_k + b_k) < 0.
    $$
    As the $\la_i$ are all positive, there is at least one $j$ such that $a_j + b_j < 0$. Thus, 
    $$
        \max_{x \in P} \langle x, y_j \rangle = a_j < - b_j = \min_{x \in Q} \langle x, y_j \rangle.
    $$
    Therefore, as $y_j$ is normal to a facet of $P$, a hyperplane parallel to a facet of $P$ separates $P$ and $Q$.
\end{proof}


\subsection{Combinatorial cubes} 

First, we consider the $d$-dimensional unit cube, $[0,1]^d$. Its faces are in bijection with the pairs of disjoint subsets of $[d]$, so letting $I$ and $J$ be two such sets, we define the corresponding face $F_I^J$ to be
$$
    F _I ^J := 
        \Bigg \{ 
            (x_1, x_2, ..., x_d) \in [0,1]^d : 
                x_k = 
                \medmath{\begin{cases} 
                    0 & \text{ if } k \in I \\ 
                    1 & \text{ if } k \in J
                \end{cases}}
        \Bigg \}
$$
and observe that it is $(d-(|I|+|J|))$-dimensional. Then, let $C$ be an arbitrary $d$-dimensional combinatorial cube. As there is a bijection between the face poset of $C$ and that of the unit cube, we reuse the above labeling for the faces of $C$. 

\begin{proposition}\label{prop:oldnotationcontainment}
Basic properties of faces of cubes.
    \begin{enumerate} 
        \item The face $F_{I_1} ^{J_1}$ contains the face $F _{I_2} ^{J_2}$ if and only if $I_1 \sbe I_2$ and $J_1 \sbe J_2$.

        \item The intersection of two faces $F _{I_1} ^{J_1}$ and $F _{I_2} ^{J_2}$ is the face $F_{I_1 \cup I_2}^{J_1 \cup J_2}$.  
    \end{enumerate}
\end{proposition}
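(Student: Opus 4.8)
The plan is to observe that both statements are purely combinatorial---each concerns only the face poset---so that it suffices to verify them for the unit cube $[0,1]^d$, where the faces $F_I^J$ are cut out by explicit coordinate equations, and then transport the conclusions to $C$. Indeed, since $C$ is a combinatorial cube there is, by definition, an isomorphism $\phi$ of face posets between $[0,1]^d$ and $C$ under which the labels $F_I^J$ are carried over; hence any relation that holds in the face poset of $[0,1]^d$ holds verbatim in that of $C$.

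For (1), I would first note that containment of faces is exactly the partial order of the face poset, which $\phi$ preserves, so it is enough to prove the claim in $[0,1]^d$. There the reverse implication is immediate: if $I_1 \subseteq I_2$ and $J_1 \subseteq J_2$, then every point of $F_{I_2}^{J_2}$ has $x_k = 0$ for $k \in I_2 \supseteq I_1$ and $x_k = 1$ for $k \in J_2 \supseteq J_1$, and so already satisfies the defining equations of $F_{I_1}^{J_1}$. For the forward implication I would argue by contrapositive. If some $k \in I_1 \setminus I_2$ exists, then on $F_{I_2}^{J_2}$ the coordinate $x_k$ is not forced to vanish---it is either free or forced to equal $1$---and since the disjoint pair $(I_2, J_2)$ gives a nonempty face, $F_{I_2}^{J_2}$ contains a point with $x_k \neq 0$, which cannot lie in $F_{I_1}^{J_1}$. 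The symmetric argument, with the roles of $0$ and $1$ exchanged, handles any $k \in J_1 \setminus J_2$.

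For (2), I would invoke the standard fact that for any polytope the set-theoretic intersection of two faces is again a face, namely their meet in the face lattice. Since $\phi$ is a poset isomorphism it preserves meets, so the intersection $F_{I_1}^{J_1} \cap F_{I_2}^{J_2}$ in $C$ is the $\phi$-image of the corresponding intersection in $[0,1]^d$. In the unit cube this intersection is read off directly from the coordinate description: a point lies in both faces exactly when $x_k = 0$ for all $k \in I_1 \cup I_2$ and $x_k = 1$ for all $k \in J_1 \cup J_2$, which is precisely $F_{I_1 \cup I_2}^{J_1 \cup J_2}$.

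The step I expect to be most delicate is this last translation in (2) of the geometric operation of intersection into the combinatorial operation of meet, since that is exactly what licenses transporting a coordinate computation from $[0,1]^d$ to the arbitrary cube $C$, whose faces carry no a priori coordinate description. Related to this is the degenerate case in which $I_1 \cup I_2$ and $J_1 \cup J_2$ fail to be disjoint: then the equations $x_k = 0$ and $x_k = 1$ are incompatible and the intersection is empty, so one should either restrict to intersecting faces or adopt the convention that $F_{I_1 \cup I_2}^{J_1 \cup J_2}$ denotes the empty face when the index sets overlap. By contrast, once the reduction to the unit cube is set up, part (1) is essentially bookkeeping.
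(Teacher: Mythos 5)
Your proof is correct, and it follows exactly the route the paper leaves implicit: the paper states this proposition without any proof, treating both claims as evident from the coordinate description of the faces $F_I^J$ of $[0,1]^d$ together with the face-poset bijection defining a combinatorial cube, which is precisely the reduction you carry out. Your attention to the degenerate case in (2) --- where $I_1 \cup I_2$ and $J_1 \cup J_2$ overlap and the intersection is the empty face --- is a point the paper glosses over, and handling it by convention as you suggest is the right fix.
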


However, we will use a more concise notation for $F_I^J$: we denote elements of $J$ with a bar, rather than using the superscript, so instead of writing $F_{\{y\}}^{\{x, z\}}$, we will subsequently write $F_{\bar x y \bar z}$.

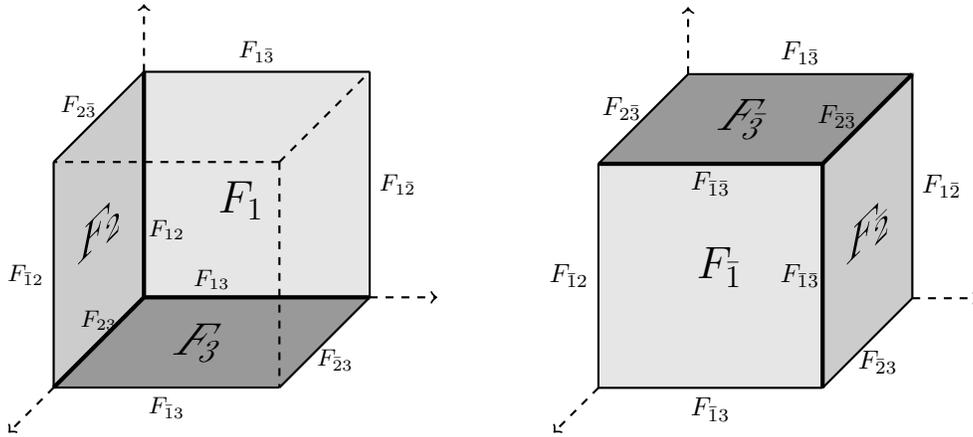
\begin{figure}[H]
    \begin{subfigure}[t]{0.4\textwidth}
        \resizebox{\linewidth}{!}{

\begin{tikzpicture}[thick,scale=3] \label{secondview3dcube}
    \coordinate (x1) at (0, 0);
    \coordinate (x13) at (0, 1);
    \coordinate (opp) at (1, 1);
    \coordinate (x12) at (1, 0);
    \coordinate (origin) at (0.4, 0.4);
    \coordinate (x3) at (0.4, 1.4);
    \coordinate (x23) at (1.4, 1.4);
    \coordinate (x2) at (1.4, 0.4);

    \coordinate (top_e3) at (0.4, 1.7);
    \coordinate (top_e2) at (1.7, 0.4);
    \coordinate (top_e1) at (-0.2, -0.2);

    \coordinate (F_12) at (0.5, 0.7);
    \node at (F_12){\small $F_{12}$};
    
    \coordinate (F_13) at (0.7, 0.48);
    \node at (F_13){\small $F_{13}$};

    \coordinate (F_23) at (0.2, 0.3);
    \node at (F_23){\small $F_{23}$};

    \draw[ultra thick] (origin) to (x3);
    \draw[ultra thick] (origin) to (x2);
    \draw[ultra thick] (origin) to (x1);

    \draw[dashed] (opp) -- (x23);
    \draw[dashed] (opp) -- (x13);
    \draw[dashed] (opp) -- (x12);

    \draw[thick] (x12) to node[below] {\small $F_{\bar1 3}$} (x1) ;
    \draw[thick] (x1) to node[left] {\small $F_{\bar1 2}$} (x13);
    \draw[thick] (x13) to node[above=4, left=-3] {\small $F_{2 \bar3}$} (x3);
    \draw[thick] (x3) to node[above] {\small $F_{1 \bar3}$} (x23);
    \draw[thick] (x12) to node[right=4, below=-1] {\small $F_{\bar2 3}$} (x2);
    \draw[thick] (x2) to node[right] {\small $F_{1 \bar2}$} (x23);
    
    \draw[fill=black,opacity=0.4] (x1) -- (origin) -- (x2) -- (x12);
    \draw[fill=black,opacity=0.2] (x1) -- (x13) -- (x3) -- (origin);
    \draw[fill=black,opacity=0.1] (origin) -- (x3) -- (x23) -- (x2);

    \draw[->, dashed, thick]  (x3) -- (top_e3);
    \draw[->, dashed, thick]  (x2) -- (top_e2);
    \draw[->, dashed, thick]  (x1) to (top_e1);

    \node at (0.83,0.83) {\huge $F_1$ };
    \node[yslant = 1] at (0.2,0.69) {\huge $F_2$ };
    \node[xslant=0.53] at (0.65,0.2) {\huge $F_3$ };

\end{tikzpicture}
        }
    \end{subfigure}
    \hspace{0.05\textwidth}
    \begin{subfigure}[t]{0.4\textwidth}
        \resizebox{\linewidth}{!}{

\begin{tikzpicture}[thick,scale=3] \label{firstview3dcube}
    \coordinate (x1) at (0, 0);
    \coordinate (x13) at (0, 1);
    \coordinate (opp) at (1, 1);
    \coordinate (x12) at (1, 0);
    \coordinate (origin) at (0.4, 0.4);
    \coordinate (x3) at (0.4, 1.4);
    \coordinate (x23) at (1.4, 1.4);
    \coordinate (x2) at (1.4, 0.4);

    \coordinate (top_e3) at (0.4, 1.7);
    \coordinate (top_e2) at (1.7, 0.4);
    \coordinate (top_e1) at (-0.2, -0.2);

    \draw[ultra thick] (opp) to node[above=1, left] {$F_{\bar2 \bar3}$} (x23);
    \draw[ultra thick] (opp) to node[below=-1] {$F_{\bar1 \bar3}$} (x13);
    \draw[ultra thick] (opp) to node[left=-3] {$F_{\bar1 \bar3}$} (x12);

    \draw[thick] (x12) to node[below] {$F_{\bar1 3}$} (x1) ;
    \draw[thick] (x1) to node[left] {$F_{\bar1 2}$} (x13);
    \draw[thick] (x13) to node[above=4, left=-3] {$F_{2 \bar3}$} (x3);
    \draw[thick] (x3) to node[above] {$F_{1 \bar3}$} (x23);
    \draw[thick] (x12) to node[right=4, below=-1] {$F_{\bar2 3}$} (x2);
    \draw[thick] (x2) to node[right] {$F_{1 \bar2}$} (x23);
    
    \draw[fill=black,opacity=0.1] (x1) -- (x13) -- (opp) -- (x12);
    \draw[fill=black,opacity=0.4] (x13) -- (x3) -- (x23) -- (opp);
    \draw[fill=black,opacity=0.2] (x12) -- (opp) -- (x23) -- (x2);

    \draw[->, dashed, thick]  (x3) -- (top_e3);
    \draw[->, dashed, thick]  (x2) -- (top_e2);
    \draw[->, dashed, thick]  (x1) to (top_e1);

    \node at (0.54,0.54) {\huge $F_{\bar1}$ };
    \node[yslant = 1] at (1.2,0.69) {\huge $F_{\bar2}$ };
    \node[xslant = 0.53] at (0.65,1.2) {\huge $F_{\bar3}$ };

\end{tikzpicture}
        }
    \end{subfigure}
    \caption{The 3-dimensional unit cube, viewed from the `inside' and `outside.'}
    \label{fig:standard3dcube}
\end{figure}

Consider the example shown in Figure \ref{fig:standard3dcube}. $C$ is 3-dimensional, and the six (2-dimensional) facets of $C$ are $F_1$, $F_{\bar{1}}$, $F_2$, $F_{\bar{2}}$, $F_3$, $F_{\bar3}$. As in Proposition \ref{prop:oldnotationcontainment}, $F_1$ contains the 1-dimensional faces $F_{12}, F_{13}, F_{1 \bar 2}, F_{1 \bar3}$, and the intersection of the 2-dimensional faces $F_1$ and  $F_{\bar 2}$ is the 1-dimensional face $F_{1 \bar2}$.

We say that two facets $F_x$ and $F_{\bar x}$ of $C$ are \textit{opposite} each other. Since faces of combinatorial cubes are themselves cubes, we have that $F_{x y}$ and $F_{x \bar y}$ are \textit{opposite} each other within $F_x$. Two facets can be parallel only if they are opposite, as otherwise the cube would collapse to a lower dimension.

When $C$ is smooth, the primitive edge directions at each vertex of $C$ span the integer lattice, and there always exists a unimodular transformation which sends these directions to the standard basis vectors. Thus, since unimodular transformations and translations preserve the IDP property and subspace parallelism, we may assume that one corner of $C$ lies at the origin and has primitive edge directions along the coordinate axes, as in Figure \ref{fig:arbitrarycubewithorigin}.

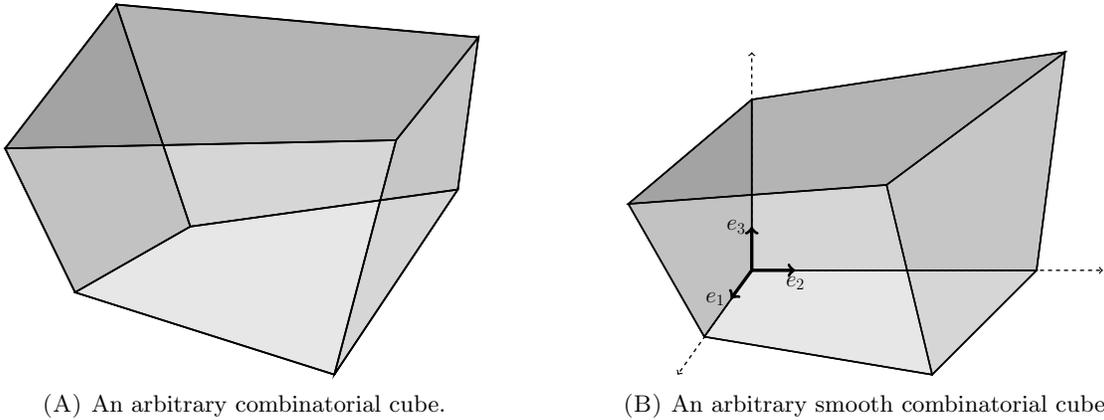
\begin{figure}[H]
    \begin{subfigure}[t]{0.42\textwidth}
        \resizebox{\linewidth}{!}{

\begin{tikzpicture}[] \label{arbitrarycube}

    \coordinate (orig) at (4.5, 2.6); 
    
    \coordinate (x1) at (1.7, 1);
    \coordinate (x2) at (11, 3.5);
    \coordinate (x3) at (2.7, 8);
    
    \coordinate (x13) at (0, 4.5);
    \coordinate (x123) at (9.5, 4.7);
    \coordinate (x12) at (8, -1);
    \coordinate (x23) at (11.5, 7.2);

    \draw[fill=black,opacity=0.05]     (x1) to (x12) to (x123) to (x13) to cycle;
    \draw[very thick]                  (x1) to (x12) to (x123) to (x13) to cycle;
    
    \draw[fill=black,opacity=0.12]     (x2) to (x23) to (x123) to (x12) to cycle;
    \draw[very thick]                  (x2) to  (x23) to  (x123) to  (x12) to  cycle;
    
    \draw[fill=black,opacity=0.2]      (x3) to (x13) to (x123) to (x23) to cycle;
    \draw[very thick]                  (x3) to (x13) to (x123) to (x23) to  cycle;

    \draw[fill=black,opacity=0.05]     (orig) to (x1) to (x12) to (x2) to cycle;
    \draw[very thick]                  (orig) to (x1) to (x12) to (x2) to cycle;
    
    \draw[fill=black,opacity=0.12]     (orig) to (x2) to (x23) to (x3) to cycle;
    \draw[very thick]                  (orig) to (x2) to (x23) to (x3) to cycle;

    \draw[fill=black,opacity=0.2]      (orig) to (x3) to (x13) to (x1) to cycle;
    \draw[very thick]                  (orig) to (x3) to (x13) to (x1) to cycle;

\end{tikzpicture}

        }
        \caption{An arbitrary combinatorial cube.}
        \label{fig:arbitrarycube}
    \end{subfigure}
    \hspace{0.1\textwidth}
    \begin{subfigure}[t]{0.42\textwidth}
        \resizebox{\linewidth}{!}{

\begin{tikzpicture}[] \label{arbitrarycubewithorigin} 

    \coordinate (orig) at (3.25, 2.75);
    
    \coordinate (x1) at (2, 1);
    \coordinate (x2) at (10.75, 2.75);
    \coordinate (x3) at (3.25, 7.25);
    
    \coordinate (x13) at (0, 4.5);
    \coordinate (x123) at (6.8, 5);
    \coordinate (x12) at (8, 0);
    \coordinate (x23) at (11.5, 8.5);

    \coordinate (axis1) at (1.3, 0);
    \coordinate (axis2) at (12.5, 2.75);
    \coordinate (axis3) at (3.25, 8.5);

    \draw[->, dashed, line width=1pt] (orig) to (axis1); 
    \draw[->, dashed, line width=1pt] (orig) to (axis2); 
    \draw[->, dashed, line width=1pt] (orig) to (axis3); 

    \draw[fill=black,opacity=0.05]     (x1) to (x12) to (x123) to (x13) to cycle;
    \draw[very thick]                  (x1) to (x12) to (x123) to (x13) to cycle;
    
    \draw[fill=black,opacity=0.12]     (x2) to (x23) to (x123) to (x12) to cycle;
    \draw[very thick]                  (x2) to  (x23) to  (x123) to  (x12) to  cycle;
    
    \draw[fill=black,opacity=0.2]      (x3) to (x13) to (x123) to (x23) to cycle;
    \draw[very thick]                  (x3) to (x13) to (x123) to (x23) to  cycle;

    \draw[fill=black,opacity=0.05]     (orig) to (x1) to (x12) to (x2) to cycle;
    \draw[very thick]                  (orig) to (x1) to (x12) to (x2) to cycle;
    
    \draw[fill=black,opacity=0.12]     (orig) to (x2) to (x23) to (x3) to cycle;
    \draw[very thick]                  (orig) to (x2) to (x23) to (x3) to cycle;

    \draw[fill=black,opacity=0.2]      (orig) to (x3) to (x13) to (x1) to cycle;
    \draw[very thick]                  (orig) to (x3) to (x13) to (x1) to cycle;

    \coordinate (a1) at (2.7, 2);
    \coordinate (a2) at (4.4, 2.75);
    \coordinate (a3) at (3.25, 3.9);
    
    \draw[->, line width=2.5pt] (orig) to (a1) node[left] {\huge $e_1$};
    \draw[->, line width=2.5pt] (orig) to (a2) node[below] {\huge $e_2$};
    \draw[->, line width=2.5pt] (orig) to (a3) node[left] {\huge $e_3$};

\end{tikzpicture}
        }
        \caption{An arbitrary smooth combinatorial cube.}
        \label{fig:arbitrarycubewithorigin}
    \end{subfigure}
    \caption{Combinatorial cubes in dimension 3.}
    \label{fig:arbcubes}
\end{figure}

In particular, we call a facet $F_I^J$ of $C$ \textit{primary} if $J = \es$, that is, when it lies entirely within a coordinate (linear) subspace, and so contains the origin. We note that a primary face $F_I$ lies in $\spann( \{e_k : k \in [d], k \notin I \})$.


\section{Parallelism in combinatorial cubes}

\subsection{Properties of parallel subspaces} 

We begin by collecting a few basic results of linear algebra which will be useful. 

\begin{lemma} \label{LemmaH1H2Parallel}
    Suppose that $H_1$ and $H_2$ are $k$-dimensional affine subspaces in $\Rd$ and that each contains two $(k-1)$-dimensional, non-parallel, affine subspaces: $F_1, G_1 \sbe H_1$ and $F_2, G_2 \sbe H_2$. Suppose also that that $F_1$ is parallel to $F_2$ and $G_1$ is parallel to $G_2$, as pictured in Figure \ref{fig:lemma3_1}. Then, $H_1$ is parallel to $H_2$.
\end{lemma}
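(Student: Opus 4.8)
The plan is to translate everything into the language of direction subspaces $\lin(\cdot)$ and then close the argument with an elementary dimension count. Recall that for affine subspaces the relation ``parallel'' means the associated linear subspaces coincide. So I would set $U = \lin(F_1)$, $V = \lin(G_1)$, and $W_i = \lin(H_i)$ for $i = 1,2$. The hypotheses $F_1 \parallel F_2$ and $G_1 \parallel G_2$ become $\lin(F_2) = U$ and $\lin(G_2) = V$, the non-parallelism of $F_1$ and $G_1$ becomes $U \neq V$, and the goal $H_1 \parallel H_2$ becomes the single equation $W_1 = W_2$.

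The key observation I would establish is that each $W_i$ is recovered as the sum $U + V$. Since $F_1 \subseteq H_1$ as affine subspaces, passing to direction spaces gives $U \subseteq W_1$, and likewise $V \subseteq W_1$, so $U + V \subseteq W_1$. Now $U$ and $V$ are both $(k-1)$-dimensional subspaces of the $k$-dimensional space $W_1$, and because $U \neq V$ we have $V \not\subseteq U$, so $U + V$ strictly contains $U$. Hence $\dim(U + V) \geq k$, which forces $U + V = W_1$. Running the identical argument on $F_2, G_2 \subseteq H_2$ yields $U + V = W_2$.

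Combining the two identities gives $W_1 = U + V = W_2$, i.e.\ $H_1 \parallel H_2$, completing the proof. I do not expect a substantive obstacle here: the only points needing care are the standard fact that containment of affine subspaces passes to containment of their direction spaces, and the dimension count establishing $U + V = W_i$, both of which are routine. The whole content of the lemma is really the single remark that a $k$-dimensional affine subspace is spanned by any two non-parallel $(k-1)$-dimensional affine subspaces it contains.
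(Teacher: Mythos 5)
Your proof is correct and follows essentially the same route as the paper's: both identify $\lin(H_i)$ as the span of $\lin(F_i)$ and $\lin(G_i)$ (the paper cites ``dimensional reasons'' where you spell out the dimension count) and then conclude $\lin(H_1) = \lin(H_2)$ from the parallelism hypotheses. Your version is simply a more detailed writeup of the same argument.
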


\begin{proof}
    Since $F_1$ is not parallel to $G_1$, for dimensional reasons, 
    $$
        \spann(\lin(F_i), \lin(G_i)) = \lin(H_i).
    $$
    for each $i$. Thus, $\lin(H_1) = \lin(H_2)$, so $H_1$ and $H_2$ are parallel.
\end{proof}

\begin{corollary} \label{CorDegenerateCase}
     Let $C$ be a d-dimensional smooth combinatorial cube for $d \geq 3$, and let $x,y,z \in [d]$ be distinct. Suppose that the pair of faces $(F_{x z},F_{x \bar z})$ are parallel and the pair of faces $(F_{y z},F_{y \bar z})$ are parallel. Then, $F_z$ is parallel to $F_{\bar z}$.
\end{corollary}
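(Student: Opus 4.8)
The plan is to apply Lemma \ref{LemmaH1H2Parallel} with the two $k$-dimensional subspaces being the facets $F_z$ and $F_{\bar z}$ themselves. Here $k = d-1$, so I need to exhibit inside each of $F_z$ and $F_{\bar z}$ a pair of non-parallel $(d-2)$-dimensional faces, matched up so that corresponding faces across the two facets are parallel. The natural candidates come directly from the hypotheses: the faces $F_{xz}, F_{yz}$ live inside $F_z$, and the faces $F_{x\bar z}, F_{y\bar z}$ live inside $F_{\bar z}$, by Proposition \ref{prop:oldnotationcontainment}, since $\{x\} \supseteq \{x\}$ etc. The hypotheses say exactly that $F_{xz} \parallel F_{x\bar z}$ and $F_{yz} \parallel F_{y\bar z}$, which is the pairing the lemma wants.

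So the first step is to identify $H_1 = \lin(F_z)$, $H_2 = \lin(F_{\bar z})$, and then $F_1 = \lin(F_{xz})$, $G_1 = \lin(F_{yz})$ inside $H_1$, with $F_2 = \lin(F_{x\bar z})$, $G_2 = \lin(F_{y\bar z})$ inside $H_2$. Each of these is a facet of the corresponding $(d-1)$-dimensional cube $F_z$ or $F_{\bar z}$, hence $(d-2)$-dimensional, so the dimension bookkeeping matches the lemma. The containments $F_{xz}, F_{yz} \subseteq F_z$ and $F_{x\bar z}, F_{y\bar z} \subseteq F_{\bar z}$ are immediate from the poset description in Proposition \ref{prop:oldnotationcontainment}. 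This is why we need $d \geq 3$: we need $x, y, z$ distinct so that $F_{xz}$ and $F_{yz}$ are genuinely distinct facets of $F_z$, and we need $F_z$ to have dimension at least $2$ for two distinct facets to exist.

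The one genuine hypothesis of Lemma \ref{LemmaH1H2Parallel} that must still be checked is that $F_{xz}$ is \emph{not} parallel to $F_{yz}$ (equivalently $F_1 \not\parallel G_1$). This follows because $F_{xz}$ and $F_{yz}$ are two distinct facets of the cube $F_z$ that are not opposite one another: within $F_z$, the facets come in opposite pairs indexed by the remaining coordinates, and the opposite of $F_{xz}$ is $F_{\bar x z}$, not $F_{yz}$ (since $x \neq y$). By the remark in the combinatorial-cubes subsection, two facets of a cube can be parallel only if they are opposite; since $F_{xz}$ and $F_{yz}$ are non-opposite facets of $F_z$, they are not parallel. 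With non-parallelism in hand, Lemma \ref{LemmaH1H2Parallel} applies directly and yields $\lin(F_z) = \lin(F_{\bar z})$, i.e.\ $F_z \parallel F_{\bar z}$.

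I expect the main (and really only) obstacle to be the non-parallelism verification, since everything else is a bookkeeping translation of the hypotheses into the shape of Lemma \ref{LemmaH1H2Parallel}. The subtlety is that ``non-opposite facets are not parallel'' was asserted for facets of a cube, and I am applying it to the cube $F_z$ rather than to $C$ itself; I should make sure that $F_{xz}$ and $F_{yz}$ are correctly identified as non-opposite facets of $F_z$ (their labels as faces of $F_z$ differ in which of the coordinates $x, y$ is fixed, and neither is the barred complement of the other), so that the collapse-to-lower-dimension argument genuinely prohibits parallelism.
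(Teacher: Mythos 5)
Your proof is correct and takes essentially the same approach as the paper, which likewise applies Lemma \ref{LemmaH1H2Parallel} with $H_1 = F_z$, $H_2 = F_{\bar z}$ and the pairs $(F_{xz}, F_{x\bar z})$, $(F_{yz}, F_{y\bar z})$. Your explicit check that $F_{xz}$ and $F_{yz}$ are non-parallel (as non-opposite facets of the cube $F_z$) fills in a hypothesis that the paper's one-line proof leaves implicit.
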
 
\begin{proof}
    As $F_{x z}, F_{y z} \sbe F_z$ and $F_{x \bar z}, F_{y, \bar z} \sbe F_{\bar z}$, by Lemma \ref{LemmaH1H2Parallel}, $F_z$ is parallel to $F_{\bar z}$.
\end{proof}

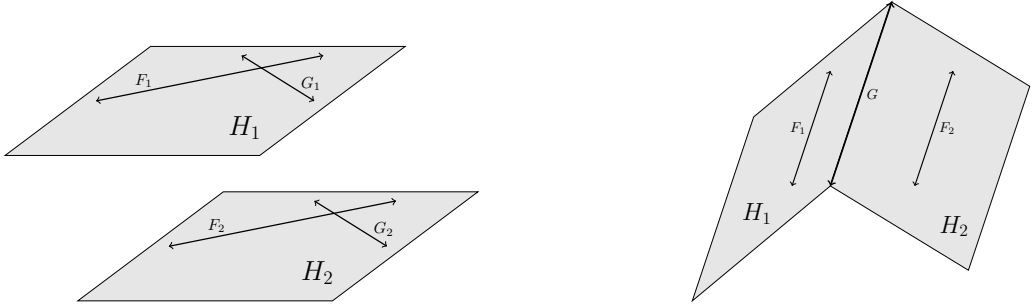
\begin{figure}[h]
    \begin{subfigure}[t]{0.42\textwidth}
        \resizebox{\linewidth}{!}{
            \begin{tikzpicture}[scale=0.8]
    
    \coordinate (A1) at (0.25, 4);
    \coordinate (B1) at (7.25, 4);
    \coordinate (C1) at (11.25, 7);
    \coordinate (D1) at (4.25, 7);
    
    \coordinate (A2) at (2.25, 0);
    \coordinate (B2) at (9.25, 0);
    \coordinate (C2) at (13.25, 3);
    \coordinate (D2) at (6.25, 3);
    
    \draw[fill=black!10] (A1) node[right=150, above=7]{\huge $H_1$} -- (B1) -- (C1) -- (D1) -- cycle;
    \draw[fill=black!10] (A2) node[right=150, above=7]{\huge $H_2$} -- (B2) -- (C2) -- (D2) -- cycle;

    \coordinate (F1a) at (2.75, 5.5);
    \coordinate (F1b) at (9, 6.75);
    \coordinate (G1a) at (6.75, 6.75);
    \coordinate (G1b) at (8.75, 5.5);
    
    \coordinate (F2a) at (4.75, 1.5);
    \coordinate (F2b) at (11, 2.75);
    \coordinate (G2a) at (8.75, 2.75);
    \coordinate (G2b) at (10.75, 1.5);
    
    \draw[<->, thick] (F1a) node[right=30, above=5]{$F_1$} to (F1b);
    \draw[<->, thick] (G1a) to (G1b) node[left=2,above=3]{$G_1$};

    \draw[<->, thick] (F2a) node[right=30, above=5]{$F_2$} to (F2b);
    \draw[<->, thick] (G2a) to (G2b)node[left=2,above=3]{$G_2$};
    
\end{tikzpicture}
        }
        \caption{Lemma \ref{LemmaH1H2Parallel}: In $\R^3$, $H_1$ and $H_2$ are planes while $F_1, F_2, G_1$, and $G_2$ are lines.}
        \label{fig:lemma3_1}
    \end{subfigure}
    \hspace{0.1\textwidth}
    \begin{subfigure}[t]{0.42\textwidth}
        \centering{
            \resizebox{0.72\linewidth}{!}{
                \begin{tikzpicture}[scale=0.75]

    \coordinate (A1) at (0,0);
    \coordinate (B1) at (2, 6);
    \coordinate (C) at (6.5, 9.75);
    \coordinate (D) at (4.5, 3.75);
    
    \coordinate (A2) at (9, 1);
    \coordinate (B2) at (11, 7);

    \coordinate (F1a) at (3.25, 3.75);
    \coordinate (F1b) at (4.5, 7.5);
    \coordinate (F2a) at (7.25, 3.75);
    \coordinate (F2b) at (8.5, 7.5);

    \draw[fill=black!10] (A1) node[right=45, above=50]{\huge $H_1$} -- (B1) -- (C) -- (D) -- cycle;
    \draw[fill=black!10] (A2) node[left=10, above=20]{\huge $H_2$} -- (B2) -- (C) -- (D) -- cycle;

    \draw[<->, thick] (F1a) to node[left]{$F_1$} (F1b);
    \draw[<->, thick] (F2a) to node[right]{$F_2$}(F2b);

    \draw[<->, very thick] (C) to node[right]{$G$}(D);
    
\end{tikzpicture}
            }
        }
        \caption{Lemma \ref{LemmaIntersectionParallel}: In $\R^3$, $H_1$ and $H_2$ are planes while $F_1$, $F_2$, and $G$ are lines.}
        \label{fig:lemma3_3}
    \end{subfigure}
    \caption{Parallelism in subspaces.}
\end{figure}

\begin{lemma} \label{LemmaIntersectionParallel}
    Let $H_1$ and $H_2$ be two non-parallel, $(d-1)$-dimensional affine hyperplanes in $\Rd$, with $d \geq 3$. Suppose that they contain $(d-2)$-dimensional affine subspaces $F_1$ and $F_2$, respectively, which are parallel to each other, as pictured in Figure \ref{fig:lemma3_3}. Then, the intersection of $H_1$ and $H_2$ is parallel to $F_1$ (and also $F_2$).
\end{lemma}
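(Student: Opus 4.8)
The plan is to reduce the assertion to a dimension count on the associated linear subspaces, and then translate the resulting equality of linear parts back into the language of parallelism. Write $L_i = \lin(H_i)$ for $i = 1,2$ and set $L_F = \lin(F_1) = \lin(F_2)$, where the last equality holds precisely because $F_1$ and $F_2$ are parallel. The first thing to record is the dimensions: each $L_i$ is $(d-1)$-dimensional, while $L_F$ is $(d-2)$-dimensional, and $L_F \sbe L_i$ for each $i$ since $F_i \sbe H_i$.

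The core of the argument is that $L_F$ and $L_1 \cap L_2$ must coincide. Because $H_1$ and $H_2$ are non-parallel, we have $L_1 \neq L_2$, and two distinct $(d-1)$-dimensional subspaces of $\Rd$ must satisfy $L_1 + L_2 = \Rd$. The dimension formula $\dim(L_1 \cap L_2) = \dim L_1 + \dim L_2 - \dim(L_1 + L_2)$ then gives $\dim(L_1 \cap L_2) = (d-1) + (d-1) - d = d-2$. On the other hand, from $L_F \sbe L_1$ and $L_F \sbe L_2$ we get $L_F \sbe L_1 \cap L_2$; since both are $(d-2)$-dimensional, they are equal, so $L_F = L_1 \cap L_2$.

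It remains to pass from linear parts back to the affine setting and confirm that the intersection $H_1 \cap H_2$ is itself a genuine $(d-2)$-dimensional affine subspace with linear part $L_1 \cap L_2$. Writing $H_i = \{ x : \langle n_i, x \rangle = c_i \}$ with $n_1, n_2$ linearly independent (this is what non-parallel gives us), the two-equation system is consistent, so $H_1 \cap H_2$ is nonempty, and its solution set is an affine subspace whose linear part is exactly $L_1 \cap L_2$. Combining with the previous paragraph, $\lin(H_1 \cap H_2) = L_1 \cap L_2 = L_F = \lin(F_1) = \lin(F_2)$, which is exactly the statement that $H_1 \cap H_2$ is parallel to $F_1$ and to $F_2$. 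The argument is short, and the only delicate point — the nearest thing to an obstacle — is precisely this final translation: one must verify that the affine intersection is nonempty (so that ``parallel'' is meaningful) rather than merely comparing linear parts in the abstract, and this is where the non-parallel hypothesis is genuinely used.
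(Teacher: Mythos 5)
Your proof is correct, but it takes a different route from the paper. The paper's proof is a two-line application of the contrapositive of Lemma \ref{LemmaH1H2Parallel}: setting $G = H_1 \cap H_2$, one has $F_1, G \sbe H_1$ and $F_2, G \sbe H_2$ with $G$ trivially parallel to itself, so if $G$ were not parallel to $F_1$, that lemma would force $H_1$ parallel to $H_2$, a contradiction. Your argument instead works directly with the linear parts: the dimension formula gives $\dim(L_1 \cap L_2) = d-2$, the containment $L_F \sbe L_1 \cap L_2$ then forces equality, and the normal-vector description of the $H_i$ shows the affine intersection is nonempty with linear part $L_1 \cap L_2$. What the paper's approach buys is economy — it reuses machinery already proved and avoids any explicit dimension count. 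What your approach buys is completeness on a point the paper leaves implicit: to apply Lemma \ref{LemmaH1H2Parallel} at all, one needs to know that $G = H_1 \cap H_2$ is a nonempty, $(d-2)$-dimensional affine subspace of each $H_i$, and the paper never verifies this. Your final paragraph (consistency of the two-equation system because $n_1, n_2$ are linearly independent) is exactly the missing justification, so your proof is in that respect the more careful of the two.
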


\begin{proof}
    Let $G = H_1 \cap H_2$, so we have that $F_1, G \sbe H_1$ and $F_2, G \sbe H_2$. Then by the contrapositive of Lemma \ref{LemmaH1H2Parallel}, because $H_1$ is not parallel to $H_2$, it must be that $G$ is parallel to $F_1$. 
\end{proof}

\begin{corollary} \label{CorThreeParallelImpliesFourth}
    Let $C$ be a d-dimensional smooth combinatorial cube with $d \geq 3$. Fix $x,y \in [d]$ with $x \neq y $, and consider the four $(d-2)$-dimensional faces $F_{xy}$, $F_{x \bar y}$, $F_{\bar x y}$, $F_{\bar x \bar y}$. If three of them are parallel to each other, then so is the fourth. 
\end{corollary}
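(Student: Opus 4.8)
The plan is to exhibit the four faces as pairwise intersections of the facets $F_x, F_{\bar x}, F_y, F_{\bar y}$ and then to recover the omitted face by feeding two of the three faces assumed parallel into Lemma~\ref{LemmaIntersectionParallel}. By Proposition~\ref{prop:oldnotationcontainment}(2) we have $F_{xy} = F_x \cap F_y$, $F_{x\bar y} = F_x \cap F_{\bar y}$, $F_{\bar x y} = F_{\bar x} \cap F_y$, and $F_{\bar x \bar y} = F_{\bar x} \cap F_{\bar y}$, so the four faces in question are exactly the pairwise intersections of these four facets. Since relabeling which side of a coordinate is called $0$ and which is called $1$ is purely notational and preserves every parallelism relation, a flip of $x$ and/or of $y$ lets me assume that the three pairwise-parallel faces are $F_{xy}$, $F_{x\bar y}$, $F_{\bar x y}$, and that $F_{\bar x \bar y}$ is the face to be shown parallel to them.

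I would first note that $F_{\bar x}$ and $F_{\bar y}$ are distinct, non-opposite facets (as $x \neq y$), so by the fact that facets of a combinatorial cube can be parallel only if they are opposite, they are non-parallel $(d-1)$-dimensional hyperplanes; their affine hulls therefore intersect in a single $(d-2)$-dimensional affine subspace. I would then apply Lemma~\ref{LemmaIntersectionParallel} with $H_1$ and $H_2$ the affine hulls of $F_{\bar x}$ and $F_{\bar y}$, choosing $F_1 = F_{\bar x y} \subseteq F_{\bar x}$ and $F_2 = F_{x\bar y} \subseteq F_{\bar y}$. These are $(d-2)$-dimensional and, being two members of the assumed parallel triple, are parallel to each other. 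The lemma then yields that $H_1 \cap H_2$ is parallel to $F_{\bar x y}$.

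To conclude, I would identify $H_1 \cap H_2$ with the fourth face. Since $F_{\bar x \bar y} = F_{\bar x} \cap F_{\bar y}$ is a $(d-2)$-dimensional face contained in both facets, its affine hull is a $(d-2)$-dimensional subspace lying inside the $(d-2)$-dimensional subspace $H_1 \cap H_2$, and hence equals it. Thus $F_{\bar x \bar y}$ is parallel to $F_{\bar x y}$, and by transitivity with the parallel triple, to all of $F_{xy}$, $F_{x\bar y}$, $F_{\bar x y}$, as desired. The step requiring the most care is precisely this identification: Lemma~\ref{LemmaIntersectionParallel} concludes something about the intersection of the two ambient affine hulls, whereas the statement concerns the face $F_{\bar x \bar y}$ itself, and the passage between them rests on the dimension count—valid because $d \geq 3$ and because $F_{\bar x}$ and $F_{\bar y}$ are genuinely non-parallel, forcing $\dim(H_1 \cap H_2) = d-2$. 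I would therefore be sure to establish the non-parallelism of $F_{\bar x}$ and $F_{\bar y}$ before invoking the lemma.
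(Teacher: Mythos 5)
Your proof is correct and follows essentially the same route as the paper: both realize $F_{\bar x \bar y}$ as the intersection $F_{\bar x} \cap F_{\bar y}$, note that these facets respectively contain the parallel faces $F_{\bar x y}$ and $F_{x \bar y}$, and invoke Lemma~\ref{LemmaIntersectionParallel}. Your additional care---checking that $F_{\bar x}$ and $F_{\bar y}$ are non-parallel (so the lemma applies) and identifying the intersection of their affine hulls with the face $F_{\bar x \bar y}$ by a dimension count---only makes explicit steps the paper leaves implicit.
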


\begin{proof}
    Suppose that of the four faces $F_{x y}$, $F_{x \bar{y}}$, $F_{\bar{x} y}$, $F_{\bar{x} \bar{y}}$, the first three are parallel. We see that the fourth face, $F_{\bar x \bar y}$, is precisely the intersection of the facets $F_{\bar x}$ and $F_{\bar y}$. But $F_{\bar x}$ contains $F_{\bar x y}$, and $F_{\bar y}$ contains $F_{x \bar y}$, which are parallel to each other. So, by Lemma \ref{LemmaIntersectionParallel}, $F_{\bar x \bar y}$ is also parallel to $F_{x \bar y}$ and $F_{\bar x y}$.
\end{proof}

\subsection{Parallelism of facets of combinatorial cubes} 

Now, we are ready to examine the parallelism of faces in combinatorial cubes, beginning with the following theorem.

\begin{theorem} \label{TheoremBaseCase2ParallelFaces}
    Every smooth, 2-dimensional combinatorial cube has two parallel facets.
\end{theorem}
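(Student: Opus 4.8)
The plan is to pass to explicit coordinates and read the parallelism directly off the smoothness conditions, using convexity to eliminate the one degenerate case that survives. A $2$-dimensional combinatorial cube is a convex quadrilateral with facets (edges) $F_1,F_{\bar1},F_2,F_{\bar2}$ and vertices $F_{12},F_{\bar1 2},F_{\bar1\bar2},F_{1\bar2}$, glued cyclically by the edges $F_2,F_{\bar1},F_{\bar2},F_1$. Using the normalization described in the preliminaries, I place the vertex $O:=F_{12}$ at the origin with its incident edges $F_2$ and $F_1$ directed along $e_1$ and $e_2$. The neighboring vertices are then $A:=F_{\bar1 2}=(a,0)$ and $B:=F_{1\bar2}=(0,b)$ for positive integers $a,b$, and I write $D:=F_{\bar1\bar2}$ for the last vertex. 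The two candidate pairs of parallel facets are $(F_1,F_{\bar1})$ and $(F_2,F_{\bar2})$, so it suffices to show that one of them is parallel.

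Next I extract the edge directions at $A$ and $B$. At $A$ the edge $F_2$ is axis-parallel with primitive direction $\pm e_1$, so by smoothness the primitive direction $u$ of the other incident edge $F_{\bar1}$ must complete $e_1$ to a lattice basis; since $D$ lies strictly above the $x$-axis, this forces $u=(p,1)$ for some integer $p$. Symmetrically, smoothness at $B$ gives that $F_{\bar2}$ has primitive direction $w=(1,s)$ for some integer $s$. Because $F_1$ points along $e_2$ and $F_2$ along $e_1$, I observe that $F_1\parallel F_{\bar1}$ exactly when $p=0$, and $F_2\parallel F_{\bar2}$ exactly when $s=0$; the theorem thus reduces to proving $ps=0$.

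Finally I combine the remaining smoothness condition with convexity. The two edges meeting at $D$ are $F_{\bar1}$ and $F_{\bar2}$, with primitive directions $u$ and $w$, so smoothness at $D$ forces $\det(u,w)=ps-1=\pm1$, that is, $ps\in\{0,2\}$. To discard $ps=2$ I use that $C$ is genuinely convex: computing the cross product of successive boundary edge vectors at $O$, $A$, and $B$ gives $+1$ in each case, so the boundary runs counterclockwise, while the corresponding cross product at $D$ equals $1-ps$, which convexity requires to be positive. Hence $ps\le 0$, and combined with $ps\in\{0,2\}$ this yields $ps=0$, so either $F_1\parallel F_{\bar1}$ or $F_2\parallel F_{\bar2}$.

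I expect the only genuine obstacle to be the elimination of the case $ps=2$. The two coordinate expressions for $D$ coming from $u$ and $w$ do not by themselves exclude it (there are integer solutions with $ps=2$ satisfying the matching equations), so the argument must really invoke that $C$ is a convex polytope rather than a self-overlapping quadrilateral. Everything else is a direct unwinding of the definition of smoothness at each of the four vertices.
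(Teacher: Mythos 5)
Your proof is correct and takes essentially the same route as the paper: normalize one vertex to the origin with axis-aligned primitive edge directions, use smoothness at the two adjacent vertices to write the remaining edge directions as $(p,1)$ and $(1,s)$, and apply the smoothness (determinant) condition at the fourth vertex to get $ps-1=\pm1$, then eliminate the case $ps=2$ geometrically. The only difference is in that last elimination step, where the paper argues that the top and right edges would fail to intersect, while you use an equivalent (and somewhat more explicitly justified) orientation/cross-product convexity argument.
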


\begin{proof}
    Suppose that $C$ is a 2-dimensional, smooth combinatorial cube. Via unimodular transformation, we take one corner of $C$ to be at the origin with primitive edge directions $(1, 0)$ and $(0, 1)$. Then, since $C$ is smooth, the primitive edge directions at each of the four vertices must span the integer lattice $\Z^2$, so they must be of the form given in Figure \ref{fig:parallelsides2d}, for some positive integers $m$ and $n$.

    \begin{figure}[H]
        \centering
            \resizebox{0.6\linewidth}{!}{

\begin{tikzpicture}[thick,scale=3]

    \coordinate (e1) at (2.45, 0);
    \coordinate (e2) at (0, 1.41);

    \coordinate (origin) at (0, 0);
    \coordinate (x1) at (1.5, 0);
    \coordinate (x2) at (0, 1);
    \coordinate (x12) at (2, 1.2);

    \coordinate (x1_a) at (1.23, 0);
    \coordinate (x1_b) at (1.607, 0.25);
    
    \coordinate (x2_a) at (0, 0.7);
    \coordinate (x2_b) at (0.3, 1.035);
    
    \coordinate (x12_a) at (1.89, 0.94);
    \coordinate (x12_b) at (1.7, 1.175);

    \coordinate (orig_a) at (0.3, 0);
    \coordinate (orig_b) at (0, 0.3);


    \foreach \Point in {(origin), (x1), (x2), (x12)}{
        \node at \Point [circle,fill,inner sep=0.6pt]{};
    }

    \draw[->, dashed] (origin) to (e1); 
    \draw[->, dashed] (origin) to (e2); 

    \draw[] (origin) to (x1);
    \draw[] (origin) to (x2);
    \draw[] (x1) to (x12);
    \draw[] (x2) to (x12);

    \draw[->, ultra thick] (origin) to (orig_a) node[below] {$(0,1)$};
    \draw[->, ultra thick] (origin) to (orig_b) node[left] {$(1,0)$};

    \draw[->, ultra thick] (x1) to (x1_a) node[below] {$(-1,0)$};
    \draw[->, ultra thick] (x1) to (x1_b) node[right] {$(m,1)$};

    \draw[->, ultra thick] (x2) to (x2_a) node[left] {$(0, -1)$};
    \draw[->, ultra thick] (x2) to (x2_b) node[above] {$(1, n)$};

    \draw[->, ultra thick] (x12) to (x12_a) node[right] {$(-m,-1)$};
    \draw[->, ultra thick] (x12) to (x12_b) node[above] {$(-1,-n)$};

    \draw[fill=black,opacity=0.1] (origin) -- (x1) -- (x12) -- (x2);

\end{tikzpicture}
            }
        \caption{2-dimensional cube, with primitive edge directions at each vertex.}
        \label{fig:parallelsides2d}
    \end{figure}
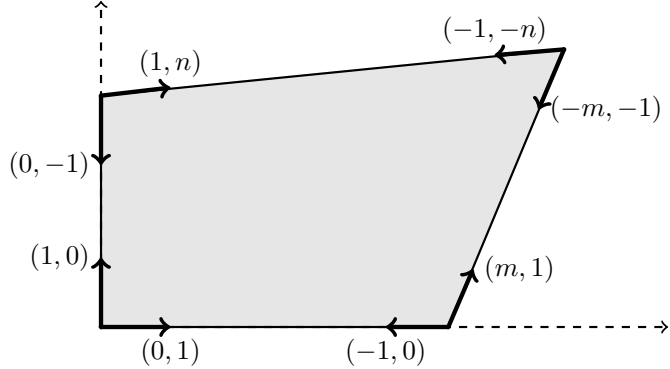

    Then, since $\{ (-1, -n), (-m, -1) \}$ must span $\Z^2$, it must be that 
    $$
        \det \begin{pmatrix}
                -1 & -m\\
                -n & -1
             \end{pmatrix}
        = \pm 1,
    $$
    which means $1 - mn = \pm 1$.

    If  $1-mn = -1$, then $nm = 2$, so either $m = 2$ and $n = 1$ or vice versa. However, in both cases the resulting top and right edges would not intersect, a contradiction.  

    Therefore, it must be that $1 - nm = 1$, i.e. $nm = 0$, so at least one of $n, m$ must be 0. In either case, $C$ has two parallel facets.
\end{proof}

Next, we prove the following lemma, which is a technical tool used in the main theorem.

\begin{lemma} \label{LemmaCaseA}
    Let $C$ be a smooth combinatorial cube of dimension $d \geq 3$ and suppose that all smooth combinatorial cubes of dimension smaller than $d$ have two parallel facets. Fix $x,y \in [d]$ with $x \neq y $, and suppose that the four  $(d-2)$-dimensional faces $F_{xy}, F_{x \bar y}, F_{\bar x y}, F_{\bar x \bar y}$ are all parallel to one another. Then, either $F_x$ and $F_{\bar x}$ are parallel or $F_y$ and $F_{\bar y}$ are parallel.
\end{lemma}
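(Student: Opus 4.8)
The plan is to collapse the $(d-2)$ shared directions and reduce the whole statement to the two-dimensional base case, Theorem \ref{TheoremBaseCase2ParallelFaces}. Write $W = \lin(F_{xy})$, which by hypothesis equals $\lin(F_{x\bar y}) = \lin(F_{\bar x y}) = \lin(F_{\bar x \bar y})$; this is a $(d-2)$-dimensional subspace, and since it is spanned by primitive edge directions at a vertex of the smooth cube $C$, the sublattice $\Zd \cap W$ is saturated, so the quotient $\Rd/W$ carries a lattice isomorphic to $\Z^2$ onto which $\pi \colon \Rd \to \Rd/W$ maps $\Zd$. Each of the four facets $F_x, F_{\bar x}, F_y, F_{\bar y}$ contains two of the given parallel $(d-2)$-faces as opposite facets of itself, so each of their linear spans contains $W$; consequently $F_x \parallel F_{\bar x}$ is equivalent to $\pi(F_x) \parallel \pi(F_{\bar x})$, and likewise for $y$.

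First I would check that $\pi(C)$ is a smooth two-dimensional combinatorial cube. Because the outer normal of each facet $F_x, F_{\bar x}, F_y, F_{\bar y}$ is orthogonal to its span, hence to $W$, each such normal descends to $(\Rd/W)^*$, so each of $\pi(F_x), \pi(F_{\bar x}), \pi(F_y), \pi(F_{\bar y})$ is a genuine face of $\pi(C)$. Since each facet contains two $(d-2)$-faces lying in distinct cosets of $W$ (otherwise the facet's span would collapse to $W$), these images are one-dimensional edges, and the four faces $F_{xy}, F_{x\bar y}, F_{\bar x y}, F_{\bar x \bar y}$ map to the four vertices joining them in a $4$-cycle. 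For smoothness and two-dimensionality I would lift a vertex of $\pi(C)$, say $\pi(F_{xy})$, to an actual vertex $v \in F_{xy}$: at $v$ the $d$ primitive edge directions of $C$ form a lattice basis, of which $d-2$ lie in $W$ and the remaining two (running along $F_x$ and along $F_y$) project to a lattice basis of $\Z^2$ — these are exactly the two edge directions of $\pi(C)$ at that vertex.

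With $\pi(C)$ a smooth $2$-dimensional combinatorial cube, Theorem \ref{TheoremBaseCase2ParallelFaces} gives it two parallel facets, which in a quadrilateral must be an opposite pair, namely $\pi(F_x) \parallel \pi(F_{\bar x})$ or $\pi(F_y) \parallel \pi(F_{\bar y})$. Since $\lin(F_x)$ and $\lin(F_{\bar x})$ both contain $W$ and are each determined by their one-dimensional images in $\Rd/W$, parallel images lift to $\lin(F_x) = \lin(F_{\bar x})$, and symmetrically for $y$; this yields the claimed conclusion.

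The hard part will be confirming that $\pi(C)$ really is a combinatorial cube — a quadrilateral with no extra vertices or edges — rather than a larger polygon, since projections can turn lower-dimensional faces of $C$ into edges of the image. The cleanest route is to show that the only facets of $C$ whose linear span contains $W$ are $F_x, F_{\bar x}, F_y, F_{\bar y}$ (which follows, working at a vertex of the nonempty face $F_{xyz}$, from the linear independence of the $d$ edge directions there), and then to verify that the four full-dimensional normal cones $N_{F_{xy}}, N_{F_{x\bar y}}, N_{F_{\bar x y}}, N_{F_{\bar x \bar y}}$, all of which lie in the two-dimensional space $W^\perp$, already tile it, so that no vertex of $C$ can contribute a spurious vertex to $\pi(C)$. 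This is exactly where smoothness (and, if needed, the inductive hypothesis on lower-dimensional cubes) must be invoked to preclude degeneration of $\pi(C)$ into a triangle or into a polygon with more than four edges.
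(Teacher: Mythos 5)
Your approach is genuinely different from the paper's. The paper proves this lemma by induction on $d$: it applies the parallel-facets hypothesis to the facet $F_3$, and then combines Corollary \ref{CorDegenerateCase}, the lemma's own inductive hypothesis applied to $F_3$, and direct span computations to conclude that $F_x \parallel F_{\bar x}$ or $F_y \parallel F_{\bar y}$. You instead quotient by $W = \lin(F_{xy})$ and reduce everything in one step to Theorem \ref{TheoremBaseCase2ParallelFaces}. The lattice-theoretic frame of your reduction is sound: $W \cap \Z^d$ is saturated because $W$ is spanned by part of a lattice basis of edge directions at a vertex; the spans of $F_x, F_{\bar x}, F_y, F_{\bar y}$ all contain $W$, so parallelism descends to and lifts from $\R^d/W$; and the two edge directions at a lifted vertex not lying in $W$ project to a lattice basis of the quotient lattice (in particular they stay primitive), which gives smoothness of $\pi(C)$ at each of its four corners. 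Note also that your argument, once complete, never invokes the hypothesis that lower-dimensional smooth cubes have two parallel facets --- only the $d=2$ theorem --- so it actually proves a stronger statement than the paper's lemma, and the ``if needed'' in your final sentence can be deleted.

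The one genuine gap is the step you flag yourself: the proposal never proves that $\pi(C)$ is a quadrilateral; the normal-cone tiling is a plan, not an argument. The claim is true, and it can be closed by pure convexity, with no smoothness and no fan analysis. First, since $n_{F_x}$ vanishes on $W \subseteq \lin(F_x)$, it induces a functional on $\R^d/W$ whose maximum over $\pi(C)$ is attained exactly on $\pi(F_x)$; hence $\pi(F_x)$ is a face of $\pi(C)$, and it is an edge because $\dim(\lin(F_x)) - \dim(W) = 1$. The four edges so obtained are pairwise distinct: equal images would make the two inducing normals positive multiples of one another, forcing the corresponding facets of $C$ to coincide. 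Second, the endpoints of the segment $\pi(F_x)$ are precisely $\pi(F_{xy})$ and $\pi(F_{x\bar y})$: inside $\lin(F_x)$ the orthogonal complement of $W$ is one-dimensional, so the two distinct facets $F_{xy}$ and $F_{x\bar y}$ of $F_x$, whose spans both equal $W$, carry opposite normal directions within $F_x$ and are therefore the maximizing and minimizing faces of any nonzero functional on $F_x$ vanishing on $W$; the same holds for the other three facets. Third, since every vertex of a polygon lies on exactly two edges, the four corner points $\pi(F_{xy}), \pi(F_{x\bar y}), \pi(F_{\bar x \bar y}), \pi(F_{\bar x y})$ are pairwise distinct (a coincidence of opposite corners would put one point on four distinct edges, and a coincidence of adjacent corners would degenerate one of the segments). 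Therefore the union of the four edges is a simple closed curve contained in the boundary of $\pi(C)$, which is itself a simple closed curve; the two must coincide, so $\pi(C)$ is exactly the quadrilateral with these four edges. With this inserted, your proof is complete: Theorem \ref{TheoremBaseCase2ParallelFaces} gives two parallel edges of $\pi(C)$, they must be an opposite pair, and parallelism lifts to $C$ as you describe.
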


\begin{proof}
    Suppose without loss of generality that $x = 1$ and $y = 2$. Then, we proceed by induction on $d$. 
    
    When $d=3$, consider the facet $F_3$ of the cube. It has a pair of parallel faces by hypothesis, either $(F_{1 3}, F_{\bar1 3})$ or $(F_{2 3}, F_{\bar2 3})$. By Corollary \ref{CorDegenerateCase}, either $F_1$ and $F_{\bar 1}$ are parallel or $F_2$ and $F_{\bar 2}$ are parallel, respectively.

    Suppose $d > 3$, and that the result holds for all smaller dimensional cubes. As in the base case, the facet of the cube $F_3$ has a pair of parallel faces $(F_{3 x}, F_{3 \bar x})$ for some $x \neq 3$. If $x = 1$ or $x = 2$, then by Corollary \ref{CorDegenerateCase}, either $F_1$ and $F_{\bar 1}$ are parallel or $F_2$ and $F_{\bar 2}$ are parallel, respectively. So, suppose that $x \neq 1,2,3$; without loss of generality, let $x = 4$.

    Since $F_{12}, F_{1 \bar 2}, F_{\bar 1 2}, F_{\bar 1 \bar 2}$ are all parallel, their intersections with $F_3$ are also parallel, i.e. $F_{123},$ $F_{1 \bar 2 3},$ $F_{\bar 1 2 3},$ $F_{\bar 1 \bar 2 3}$ are all parallel to each other. Then by the inductive hypothesis, $F_3$ has a pair of parallel faces, which is either $(F_{13}, F_{\bar1 3})$ or $(F_{2 3}, F_{\bar 2 3})$. Without loss of generality, assume the former are parallel. Then in $F_3$ there are pairs of parallel faces $(F_{13}, F_{\bar1 3})$ and $(F_{3 4}, F_{3 \bar4})$. It follows that $F_{1 3 4}$ and $F_{\bar1 3 \bar4}$ are parallel. Since $F_1$ contains both $F_{1 3 4}$ and $F_{1 2}$, and $F_{\bar1}$ contains both $F_{\bar1 3 \bar4}$ and $F_{\bar1 2}$, we get that 
    $$
        \lin(F_1) = \spann \big( \lin(F_{1 3 4}), \lin(F_{1 2}) \big) 
                  = \spann \big( \lin(F_{\bar1 3 \bar4}), \lin(F_{\bar1 2}) \big) = \lin(F_{\bar 1}).
    $$
    Thus, $F_1$ and $F_{\bar1}$ are parallel. 
\end{proof}

We can now proceed with the main theorem of this section.

\begin{figure}[h]
        \centering
        \begin{subfigure}[t]{0.45\textwidth}
            \resizebox{\linewidth}{!}{
                \begin{tikzpicture}[scale=0.7]
    \coordinate (orig) at (3.25, 2.75);
    
    \coordinate (x1) at (0, -1);
    \coordinate (x2) at (11.25, 2.75);
    \coordinate (x3) at (3.25, 7.25);
    
    \coordinate (x13) at (0.7, 4.5);
    \coordinate (x123) at (8.7, 4.5);
    \coordinate (x12) at (8, -1);
    \coordinate (x23) at (11.25, 7.25);

    \draw[fill=black,opacity=0.05] (x1) to (x12) to (x123) to (x13) to cycle; 
    \draw[fill=black,opacity=0.3] (x2) to (x23) to (x123) to (x12) to cycle; 
    \draw[fill=black,opacity=0.05] (x3) to (x13) to (x123) to (x23) to cycle; 

    \draw[fill=black,opacity=0.05] (orig) to (x1) to (x12) to (x2) to cycle; 
    \draw[fill=black,opacity=0.05] (orig) to (x2) to (x23) to (x3) to cycle; 
    \draw[fill=black,opacity=0.3] (orig) to (x3) to (x13) to (x1) to cycle; 

    \draw[thick] (x1) to node[below]{\large$\bm{F_{\bar1 3}}$} 
        (x12) to 
        (x123) to 
        (x13) to 
        cycle;
    \draw[thick] (x2) to 
        (x23) to 
        (x123) to 
        (x12) to 
        cycle;
    \draw[thick] (x3) to 
        (x13) to 
        (x123) to 
        (x23) to node[above]{\large $\bm{F_{1 \bar3}}$}
        cycle;

    \draw[thick] (orig) to 
        (x1) to 
        (x12) to 
        (x2) to 
        cycle;
    \draw[thick] (orig) to node[above]{\large$\bm{F_{13}}$} 
        (x2) to 
        (x23) to 
        (x3) to 
        cycle;
    \draw[thick] (orig) to 
        (x3) to 
        (x13) to 
        (x1) to 
        cycle;

    \draw[line width=1mm] (x3) to (x23);
    \draw[line width=1mm] (orig) to (x2);
    \draw[line width=1mm] (x1) to (x12);

\end{tikzpicture}
            }
            \caption{The faces $F_{1 \bar3}$, $F_{13}$, and $F_{\bar1 3}$ are parallel to each other.}
            \label{subfig:straightforward}
        \end{subfigure}
        \hfill
        \begin{subfigure}[t]{0.45\textwidth}
            \resizebox{\linewidth}{!}{
                \begin{tikzpicture}[scale=0.7]
    \coordinate (orig) at (3.25, 2.75);
    
    \coordinate (x1) at (0, -1);;
    \coordinate (x2) at (10.75, 2.75);
    \coordinate (x3) at (3.25, 7.25);
    
    \coordinate (x13) at (0.8, 4.7);
    \coordinate (x123) at (7, 4.3);
    \coordinate (x12) at (5.5, -1);
    \coordinate (x23) at (10.75, 9);

    \draw[fill=black,opacity=0.05] (x1) to (x12) to (x123) to (x13) to cycle; 
    \draw[fill=black,opacity=0.12] (x2) to (x23) to (x123) to (x12) to cycle; 
    \draw[fill=black,opacity=0.2] (x3) to (x13) to (x123) to (x23) to cycle; 

    \draw[fill=black,opacity=0.05] (orig) to (x1) to (x12) to (x2) to cycle; 
    \draw[fill=black,opacity=0.12] (orig) to (x2) to (x23) to (x3) to cycle; 
    \draw[fill=black,opacity=0.2] (orig) to (x3) to (x13) to (x1) to cycle; 

    \draw[very thick] (x1) to node[below]{\Large $\bm F_{\bar1 3}$} 
        (x12) to 
        (x123) to 
        (x13) to 
        cycle;
    \draw[very thick] (x2) to node[right]{\Large$\bm F_{1 \bar2}$} 
        (x23) to 
        (x123) to 
        (x12) to 
        cycle;
    \draw[very thick] (x3) to node[left=9, above=-1]{\Large $\bm F_{2 \bar3}$} 
        (x13) to 
        (x123) to 
        (x23) to 
        cycle;

    \draw[very thick] (orig) to node[left = 5, above=3]{\Large $\bm F_{23}$} 
    (x1) to (x12) to (x2) to cycle;
    \draw[very thick] (orig) to node[right=40, above=1]{\Large $\bm F_{13}$} 
    (x2) to (x23) to (x3) to cycle;
    \draw[very thick] (orig) to node[right=11, above=14]{\Large $\bm F_{12}$} 
    (x3) to (x13) to (x1) to cycle;

    \draw[line width=1mm] (orig) to (x1);
    \draw[line width=1mm] (x1) to (x12);

    \draw[line width=1mm] (orig) to (x3);
    \draw[line width=1mm] (x3) to (x13);
    
    \draw[line width=1mm] (orig) to (x2);
    \draw[line width=1mm] (x2) to (x23);
    
\end{tikzpicture}
            }
            \caption{The pairs of faces $(F_{12}, F_{1 \bar2})$, $(F_{13}, F_{\bar1 3})$, and $(F_{23}, F_{2 \bar3})$ are parallel.}
            \label{subfig:pinwheel}
        \end{subfigure}

        \vspace{5mm}
        \begin{subfigure}[t]{0.45\textwidth}
            \resizebox{\linewidth}{!}{

\begin{tikzpicture}[scale=0.7]

    \coordinate (orig) at (3.25, 2.75);
    
    \coordinate (x1) at (0, -1);;
    \coordinate (x2) at (10.75, 2.75);
    \coordinate (x3) at (3.25, 7.25);
    
    \coordinate (x13) at (0.8, 4.7);
    \coordinate (x123) at (6.5, 5.5);
    \coordinate (x12) at (5.5, -1);
    \coordinate (x23) at (10.75, 9);

    \draw[fill=black,opacity=0.05] (x1) to (x12) to (x123) to (x13) to cycle; 
    \draw[fill=black,opacity=0.12] (x2) to (x23) to (x123) to (x12) to cycle; 
    \draw[fill=black,opacity=0.2] (x3) to (x13) to (x123) to (x23) to cycle; 

    \draw[fill=black,opacity=0.05] (orig) to (x1) to (x12) to (x2) to cycle; 
    \draw[fill=black,opacity=0.12] (orig) to (x2) to (x23) to (x3) to cycle; 
    \draw[fill=black,opacity=0.2] (orig) to (x3) to (x13) to (x1) to cycle; 

    \tikzset{decoration={snake,amplitude=.5mm,segment length=3mm,
                       post length=0mm,pre length=0mm}}
  
    \draw[line width=1mm] (x1) to node[below]                   {\large $F_{\bar1 3}$} (x12);
    \draw[decorate, very thick] (x13) to node[left=10, above=1] {\Large $\bm F_{\bar1 2}$} (x1);
    \draw[line width=1mm] (x2) to node[right]                   {\large $F_{1 \bar2}$} (x23);
    \draw[dotted, very thick] (x23) to node[below=5]            {\Large $\bm F_{\bar2 \bar3}$} (x123);
    \draw[decorate, very thick] (x123) to node[below=15, left=2]{\Large $\bm F_{\bar1 \bar2}$} (x12);
    \draw[dotted, very thick] (x12) to node[below=5, right=1]   {\Large $\bm F_{\bar2 3}$} (x2); 
    \draw[line width=1mm] (x3) to node[left=9, above=-1]        {\large $F_{2 \bar3}$} (x13);
    \draw[loosely dashed, very thick] (x13) to node[right=20, above=1]  {\Large $\bm F_{\bar1 \bar3}$} (x123);
    \draw[loosely dashed, very thick] (x23) to node[above]              {\Large $\bm F_{1 \bar3}$} (x3);

    \draw[line width=1mm] (orig) to node[left = 5, above=3]     {\large $F_{23}$} (x1); 
    \draw[line width=1mm] (orig) to node[right=40, above=1]     {\large $F_{13}$} (x2); 
    \draw[line width=1mm] (orig) to node[right=11, above=16]    {\large $F_{12}$} (x3); 

\end{tikzpicture}
            }
            \caption{The pairs of faces $(F_{\bar1 2}, F_{\bar1 \bar2})$, $(F_{1 \bar3}, F_{\bar1 \bar3})$, and $(F_{\bar2 3}, F_{\bar2 \bar3})$ are parallel, and $\lin(F_{\bar1 2}) \neq \spann(e_3)$.}
            \label{subfig:casebii}
        \end{subfigure}
        
        \caption{A 3-dimensional combinatorial cube, with various parallel faces.}
        \label{fig:parallelproof}
    \end{figure}
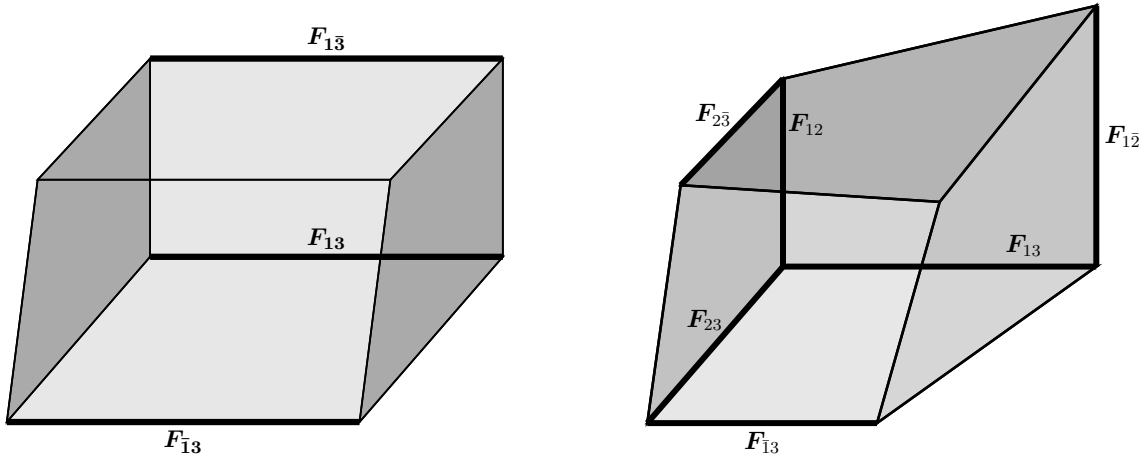
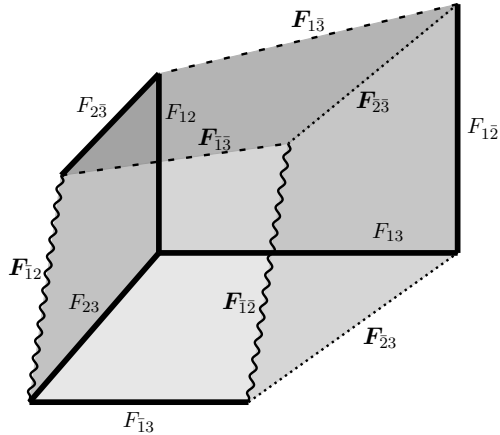

\begin{theorem} \label{TheoremMain2ParallelFaces}
    Let $C$ be a $d$-dimensional smooth combinatorial cube in $\Rd$ with $d \geq 2$. Then, $C$ has two parallel facets. 
\end{theorem}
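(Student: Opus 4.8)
The plan is to induct on the dimension $d$. The base case $d = 2$ is exactly Theorem \ref{TheoremBaseCase2ParallelFaces}. For the inductive step, fix $d \geq 3$ and assume that every smooth combinatorial cube of dimension less than $d$ has two parallel facets. Since every facet of $C$ is itself a smooth combinatorial cube of dimension $d-1$, the inductive hypothesis guarantees that each of the $2d$ facets $F_i, F_{\bar i}$ has a parallel pair of its own facets, and these are parallel pairs of $(d-2)$-faces of $C$. This is moreover precisely the standing hypothesis needed to invoke Lemma \ref{LemmaCaseA}. The goal is then to exhibit two parallel facets of $C$.

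To each facet I would assign a \emph{parallel direction}: a coordinate $z$ such that the two opposite faces of that facet in the $z$-direction are parallel (so, for instance, $F_x$ having parallel direction $z$ means $F_{xz} \parallel F_{x\bar z}$). Suppose first that two facets whose coordinate directions $x \neq y$ differ share a common parallel direction $z$. Then the configuration of Lemma \ref{LemmaH1H2Parallel} is satisfied with $H_1 = F_z$ and $H_2 = F_{\bar z}$, which is exactly the content of Corollary \ref{CorDegenerateCase} (the argument is insensitive to the signs, so barred facets may be used as well), and we conclude $F_z \parallel F_{\bar z}$. Otherwise no two facets of different coordinates share a parallel direction; since there are $2d$ facets but only $d$ directions, a short counting argument forces each facet to have a \emph{unique} parallel direction and forces these choices to assemble into a fixed-point-free permutation $\pi$ of $[d]$ for which both $F_i$ and $F_{\bar i}$ have parallel direction $\pi(i)$.

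It then remains to analyze the cycle structure of $\pi$. If $\pi$ contains a transposition $(i\,j)$, then $F_i, F_{\bar i}$ are parallel in direction $j$ and $F_j, F_{\bar j}$ are parallel in direction $i$; chaining the four resulting relations shows that all four $(d-2)$-faces $F_{ij}, F_{i\bar j}, F_{\bar i j}, F_{\bar i \bar j}$ are parallel, so Lemma \ref{LemmaCaseA} yields two parallel facets at once. The difficulty is concentrated entirely in the remaining case, where every cycle of $\pi$ has length at least three --- the ``pinwheel'' configuration of Figure \ref{fig:parallelproof}. Here the propagation lemmas are exhausted: along a coordinate pair $\{i,\pi(i)\}$ the permutation supplies only the two ``vertical'' pairs $F_{i\,\pi(i)} \parallel F_{i\,\overline{\pi(i)}}$ and $F_{\bar i\,\pi(i)} \parallel F_{\bar i\,\overline{\pi(i)}}$, and since $\pi(\pi(i)) \neq i$ no third parallelism is available to trigger Corollary \ref{CorThreeParallelImpliesFourth}. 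I expect this to be the main obstacle.

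To break the pinwheel I would descend from the combinatorics to the geometry and use smoothness directly. Normalizing one vertex to the origin with primitive edge directions $e_1, \dots, e_d$, I would record the remaining (integer) vertices in terms of the edge-length parameters, impose the pinwheel parallelisms, and then apply the smoothness condition that the primitive edge directions at each vertex have determinant $\pm 1$. These unimodularity constraints should force enough relations among the parameters --- split according to whether a distinguished edge points along a coordinate axis, that is, whether $\lin(F_{\bar 1 2}) = \spann(e_3)$ as in Figure \ref{fig:parallelproof} --- to either produce two parallel facets outright or create a coordinate pair with three parallel $(d-2)$-faces, at which point Corollary \ref{CorThreeParallelImpliesFourth} and Lemma \ref{LemmaCaseA} close the argument. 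In dimension three this is the whole story, since the only fixed-point-free permutations of $[3]$ are the two $3$-cycles; for $d > 3$ the delicate part will be showing that any long cycle likewise reduces to this three-dimensional analysis.
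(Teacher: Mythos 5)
Your Case A and Case B1 are correct, and they line up with the paper's own proof: Case A (two facets from different coordinate classes sharing a parallel direction) is the paper's first case, handled by Corollary \ref{CorDegenerateCase} (and you are right that the corollary is sign-insensitive), while your transposition case, after chaining the four parallelisms, is the paper's Case 1, handled by Corollary \ref{CorThreeParallelImpliesFourth} and Lemma \ref{LemmaCaseA}. The counting argument producing the fixed-point-free permutation $\pi$ is also sound. The genuine gap is Case B2. For the pinwheel configuration ($\pi$ with every cycle of length at least three, each facet with a unique parallel direction shared by its opposite facet) you offer only a plan: ``record the vertices, impose the parallelisms, the unimodularity constraints should force enough relations.'' No coordinates are written down, no relation is derived, and the claimed reduction of long cycles to the three-dimensional analysis for $d>3$ is not even sketched. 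Since B2 is by construction exactly the configuration that none of the quoted lemmas can touch --- as you observe yourself --- the proposal as it stands establishes nothing beyond what Corollary \ref{CorDegenerateCase} and Lemma \ref{LemmaCaseA} already give, and the heart of the induction step is missing.

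For comparison, the paper closes its remaining case without any determinant computation. Writing $m_i$ for a parallel direction of $F_{\bar i}$, when no cross pair $(F_{j m_j}, F_{\bar j m_j})$ is parallel it argues with affine spans: with $m_1 = 2$, the space $\lin(F_{\bar 1 \bar 2}) = \lin(F_{\bar 1 2})$ lies in $\lin(F_2)$ but differs from $\spann(e_3, \dots, e_d)$, and together with $\lin(F_{\bar 2 3}) = \lin(F_{2 3}) = \spann(e_1, e_4, \dots, e_d)$ it spans $\lin(F_2) = \lin(F_{\bar 2})$, forcing $F_2 \parallel F_{\bar 2}$. Note, however, that this argument consumes the existence of a primary facet $F_y$ with $y \neq 1$ whose parallel direction is $2 = m_1$ (the paper's ``there is a $y \neq 1$ such that $F_y$ has the pair $(F_{2y}, F_{\bar 2 y})$''); in your language it runs precisely when the configuration falls back into Case A. In your B2 configuration, where $m_i = k_i = \pi(i)$ for every $i$ and the directions are unique, no such $y$ exists, so your case split has correctly isolated the crux rather than resolved it. A resolution does exist at the coordinate level, and in dimension three it needs less than you expect: placing a corner at the origin and writing the three expressions for the vertex $F_{\bar 1 \bar 2 \bar 3}$ forced by the trapezoid conditions on $F_{\bar 1}$, $F_{\bar 2}$, $F_{\bar 3}$ contradicts the no-cross-parallelism assumptions using positivity of the coefficients alone, with no unimodularity beyond the initial normalization. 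Your proposal stops exactly where this work begins, both for $d=3$ and for the unaddressed long-cycle reduction, so it cannot be accepted as a proof.
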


\begin{proof}

    We proceed by induction on $d$. Theorem \ref{TheoremBaseCase2ParallelFaces} gives us our base case when $d=2$, so let $d \geq 3$ and inductively assume that each smaller dimensional such cube has two parallel facets. As usual, we may assume that one corner of $C$ lies at the origin and has primitive edge directions $e_1, e_2, ..., e_d$. 

    Consider the $d$ primary facets of the cube, $F_1, F_2, ..., F_d$, each of which is a $(d-1)$-dimensional cube itself. By our inductive hypothesis, each of them contains a pair of parallel $(d-2)$-faces. So, let us say that for each $i \in [d]$, $F_i$ contains a pair of parallel faces $(F_{i k_i}, F_{i \bar{k_i}})$ for some $k_i \in [d]$. 

    First, suppose that there are distinct indices $i,j \in [d]$ such that $k_i = k_j =: k$, as in Figure \ref{fig:parallelproof}\subref{subfig:straightforward}. (Note that $k \neq i,j$.) Then, by Corollary \ref{CorDegenerateCase}, it must be that $F_k$ and $F_{\bar k}$ are parallel.

    Now, assume that all of the $k_i$'s are distinct, as in Figure \ref{fig:parallelproof}\subref{subfig:pinwheel}. It follows that for each $k \in [d]$ that there is a unique $i$ such that $k_i = k$. Then, consider the non-primary facets of the cube, $F_{\bar1}, F_{\bar2}, ..., F_{\bar d}$. By the inductive hypothesis, they also contain pairs of parallel $(d-2)$-faces. So, let us say that for each $i \in [d]$, $F_{\bar i}$ contains the pair of parallel faces $(F_{\bar i {m_i}}, F_{\bar i \bar{m_i}})$ for some $m_i \in [d]$. 

    We next consider different cases.

    \vspace{2mm} \noindent \textbf{Case 1}: There exists a $j$ such that the pair $(F_{j m_j}, F_{\bar j {m_j}})$ are parallel. 

    Without loss of generality, let us take $j = 1$ and $m_j = 2$. Then, the three faces $F_{1 2}$, $ F_{\bar1 2}$, and $F_{\bar1 \bar2}$ are all parallel, and by Corollary \ref{CorThreeParallelImpliesFourth}, $F_{1 \bar2}$ is parallel to them as well.  However, we see that this case is actually impossible if $d = 3$: The above tells us that $F_1$ has parallel pair $(F_{1 2}, F_{1 \bar2})$ and $F_2$ has parallel pair $(F_{1 2}, F_{\bar 1 2})$. So it is impossible for $k_1, k_2, k_3$ to be distinct, as the parallel faces in $F_3$ must either be $(F_{1 3}, F_{\bar1 3})$ or $(F_{2 3}, F_{\bar2 3})$. So, suppose that $d \geq 4$. Then, by Lemma \ref{LemmaCaseA}, either $F_1$ and $F_{\bar1}$ are parallel or $F_2$ and $F_{\bar2}$ are parallel.

    \vspace{2mm} \noindent \textbf{Case 2}: For every $i \in [d]$, $(F_{i {m_i}}, F_{\bar i {m_i}})$ are not parallel.  
    
    Without loss of generality, let us again take $j=1$ and $m_j = 2$. Since $F_{1 2}$ and $F_{\bar1 2}$ are not parallel, it must be that $\lin(F_{\bar1 2}) \neq \spann(e_3, e_4, ..., e_d)$, as pictured in Figure \ref{subfig:casebii}. However,  
    $$
        \lin(F_{\bar1 \bar2}) = \lin(F_{\bar1 2}) \sbe \lin(F_2) = \spann(e_1, e_3, e_4, ..., e_d).
    $$
    By assumption, there is a $y \neq 1$ such that the facet $F_y$ has the pair of parallel faces $(F_{2 y}, F_{\bar2 y})$. Without loss of generality, we can take $y = 3$, and then $\lin(F_{\bar2 3}) = \lin(F_{2 3}) = \spann(e_1, e_4, e_5, ..., e_d)$.

    Since $\dim(F_{\bar1 \bar2}) = d-2$, we have that 
    $$
        \lin(F_{\bar1 \bar2}) = \spann(v_1, v_2, ..., v_{d-2})
    $$ 
    where each $v_k \in \spann(\{e_1, e_3, e_4, ..., e_d\})$. However, we observe two things: First, these vectors can't be such that $\lin(F_{\bar1 \bar2}) = \spann(e_3, e_4, ..., e_d)$ by assumption. Second, we can't have $\lin(F_{\bar1 \bar2}) = \spann( \{e_1, e_3, e_4, ..., e_d\} \sm \{e_p\})$ for any $p \in \{3, 4, ..., d\}$, otherwise $\lin(F_{1 \bar2})$ would equal $\lin(F_{p \bar2})$, collapsing the face $F_{\bar2}$. So in particular, we know that there is at least one $v_k$ which can be written as a linear combination of $e_1, e_3, e_4, ..., e_d$ with a nonzero coefficient for $e_3$. Then,
    \begin{align*}
        \spann(\lin(F_{\bar1 \bar2}), \lin(F_{\bar2 3})) 
            &= \spann(e_1, e_4, e_5, ..., e_d, v_1, v_2, ..., v_{d-2}) \\
            &= \spann(e_1, e_3, e_4, ..., e_d) \\
            &= \lin(F_2).
    \end{align*}
    Lastly, since $F_{\bar1 \bar2}, F_{\bar2 3} \sbe F_{\bar2}$, we have that $\spann(\lin(F_{\bar1 \bar2}), \lin(F_{\bar2 3}))= \lin(F_{\bar2})$. So $\lin(F_2) = \lin(F_{\bar2})$, and thus $F_2$ and $F_{\bar2}$ are parallel.
\end{proof}


\section{IDP in prisms, prismatoids, and cubes}

A $d$-dimensional \textit{prism} is a polytope which is affinely equivalent to a polytope $Q \times [0,1]$ for some $(d-1)$-dimensional polytope $Q$. This yields \textit{top} and \textit{bottom} facets, $Q \times \{1\}$ and $Q \times \{0\}$, respectively, which are parallel to each other. We define a \textit{prismatoid} to be a polytope whose face lattice is isomorphic to that of a prism and whose corresponding top and bottom faces are parallel (a slight restriction on the definition given in \cite{CS23}). Observe that every $d$-dimensional smooth combinatorial cube is a prismatoid where $Q$ is a $(d-1)$-dimensional smooth combinatorial cube.

It is convenient to consider prismatoids whose top and bottom facets are parallel to the coordinate plane $\{(x_1, ..., x_d) : x_d = 0\}$. Again, since unimodular transformations and translations preserve the IDP, Minkowski equivalence, and subspace parallelism, in this paper we will assume all prismatoids have tops and bottoms parallel to this coordinate plane, and further that smooth prismatoids have one vertex at the origin, with primitive edge directions $e_1, e_2, ..., e_d$.

\begin{lemma} \label{LemmaTopBottomComboPrismsMinkowskiEquiv}
    The top and bottom of a prismatoid are Minkowski equivalent. 
\end{lemma}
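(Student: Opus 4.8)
The plan is to apply Proposition \ref{PropMEquivWhenAllFacetsParallel}: to show that the top $T$ and bottom $B$ of a prismatoid are Minkowski equivalent, it suffices to exhibit a bijection between their face posets under which corresponding facets are parallel. Place the prismatoid $P$ so that $T$ and $B$ lie in the parallel hyperplanes $H_T = \{x_d = c_1\}$ and $H_B = \{x_d = c_0\}$ with $c_1 \neq c_0$, both having linear part $H = \spann(e_1, \dots, e_{d-1})$ — this is exactly the defining hypothesis that the top and bottom are parallel. Since the face lattice of $P$ is isomorphic to that of a prism $Q \times [0,1]$, every facet other than $T$ and $B$ is a \emph{side facet} corresponding to a facet of $Q$; each side facet $S$ meets $T$ in a facet $T_S = S \cap T$ of $T$ and meets $B$ in a facet $B_S = S \cap B$ of $B$. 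Because $T$ and $B$ both inherit the face poset of $Q$, the assignment $T_S \mapsto B_S$ extends to the desired poset isomorphism, so the only thing left to check is that $T_S$ is parallel to $B_S$ for every side facet $S$.

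The heart of the argument is a transversality computation. Note first that since $T = P \cap H_T$ and $S \subseteq P$, we have $T_S = S \cap H_T$, and likewise $B_S = S \cap H_B$. Now $S$ contains the nonempty faces $T_S \subseteq H_T$ and $B_S \subseteq H_B$, which lie at the distinct heights $x_d = c_1$ and $x_d = c_0$; hence $S$ contains two points differing in their last coordinate, so $\lin(S)$ is not contained in $H$ and therefore $\lin(S) \neq H$. As $\lin(S)$ and $H$ are then distinct $(d-1)$-dimensional subspaces of $\Rd$, their intersection $\lin(S) \cap H$ has dimension exactly $d-2$. On the other hand, $T_S = S \cap H_T$ gives $\lin(T_S) \subseteq \lin(S) \cap H$, and $\dim(T_S) = d-2$, so in fact
$$
    \lin(T_S) = \lin(S) \cap H.
$$
The identical computation applied to $B_S = S \cap H_B$ yields $\lin(B_S) = \lin(S) \cap H$ as well. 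Therefore $\lin(T_S) = \lin(B_S)$, i.e.\ $T_S$ and $B_S$ are parallel.

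Having matched the facets of $T$ with those of $B$ by a poset isomorphism under which corresponding facets are parallel, Proposition \ref{PropMEquivWhenAllFacetsParallel} gives that $T$ and $B$ are Minkowski equivalent. In the base dimension $d = 2$ the facets of $T$ and $B$ are single vertices and this matching degenerates, but there the conclusion is immediate, since $T$ and $B$ are parallel segments directly by the defining hypothesis of a prismatoid. The step I expect to be the main obstacle is the transversality claim $\lin(T_S) = \lin(S) \cap H = \lin(B_S)$: everything hinges on verifying that a side facet is genuinely non-horizontal (so that $\lin(S) \neq H$) and on the dimension count forcing the containment $\lin(T_S) \subseteq \lin(S) \cap H$ to be an equality.
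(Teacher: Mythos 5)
Your proof is correct and takes essentially the same approach as the paper: match the facets of $T$ and $B$ through their common side facet $S$, show parallelism via $\lin(T_S) = \lin(S) \cap \lin(T) = \lin(S) \cap \lin(B) = \lin(B_S)$, and conclude with Proposition \ref{PropMEquivWhenAllFacetsParallel}. The only difference is that your dimension-counting argument (using that $S$ is non-horizontal to force $\lin(S) \cap H$ to have dimension $d-2$) explicitly justifies the transversality identity that the paper's proof states without elaboration.
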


\begin{proof}
    Let $P$ be a prismatoid of dimension $d$ with bottom facet $B$ and top facet $T$. As $B$ and $T$ are combinatorially equivalent, there is a bijection between the their faces. So, letting two such corresponding facets of the top and bottom be $F_T$ and $F_B$, there is some third facet $F$ of $P$ such that $F_T = T \cap F$, and $F_B = B \cap F$. Then, since $T$ and $B$ are parallel,
    $$
        \lin(F_T) = \lin(F) \cap \lin(T) = \lin(F) \cap \lin(B) = \lin(F_B),
    $$
    so $F_T$ and $F_B$ are parallel. Therefore, since this holds for every pair of corresponding facets, it follows from Proposition \ref{PropMEquivWhenAllFacetsParallel} that $T$ and $B$ are Minkowski equivalent.
\end{proof}

Suppose that a prismatoid $P$ has bottom which lies in $\{(x_1, ..., x_d) : x_d = b\}$ and top which lies in $\{(x_1, ..., x_d) : x_d = b + h\}$ for integers $b$ and $h$. We define the \textit{slices} $S_l$ of $P$ as the nonempty intersections
$$
    S_l = P \cap \{(x_1, ..., x_d) : x_d = b + l\}
$$
for heights $l= 0,1, ..., h$. 

\begin{lemma} \label{LemmaSlicesAreMEquiv}
    If $P$ is a smooth prismatoid of dimension $d$, then every slice of $P$ is an integer polytope of dimension $d-1$ and is Minkowski equivalent to its bottom (and so also its top). 
\end{lemma}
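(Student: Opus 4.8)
The plan is to establish the two assertions—that each slice $S_l$ is a full-dimensional lattice polytope, and that it is Minkowski equivalent to $B$—separately, since they draw on different structure. Throughout I take $B$ to lie in $\{x_d = 0\}$ and $T$ in $\{x_d = h\}$, with one vertex of $P$ at the origin and primitive edge directions $e_1,\dots,e_d$. The boundary slices are immediate: $S_0 = B$ and $S_h = T$ are lattice polytopes with Minkowski-equivalent bottoms by Lemma \ref{LemmaTopBottomComboPrismsMinkowskiEquiv}, so I focus on interior heights $0 < l < h$. That $\dim S_l = d-1$ is then clear, as $\{x_d = l\}$ meets the interior of the $d$-dimensional $P$.

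For the lattice-point claim I would first identify the vertices of $S_l$. Since every vertex of $B$ sits at height $0$ and every vertex of $T$ at height $h$, the only edges of $P$ crossing $\{x_d = l\}$ for $0 < l < h$ are the \emph{vertical} edges joining a vertex $v_B$ of $B$ to its corresponding vertex $v_T$ of $T$; hence the vertices of $S_l$ are exactly the points where these vertical edges meet $\{x_d = l\}$. The heart of the matter is to show that each vertical edge has primitive direction $u = (u_1,\dots,u_d)$ with $u_d = 1$. At a vertex $v_B$ of $B$ there are $d-1$ edges lying in $B$, with primitive directions $w_1,\dots,w_{d-1}$ each having vanishing last coordinate, together with the vertical edge of direction $u$, $u_d > 0$. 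Smoothness says the matrix $M$ with columns $w_1,\dots,w_{d-1},u$ has $\det M = \pm 1$; cofactor expansion along the last row gives $\det M = \pm\, u_d \det W'$, where $W'$ records the first $d-1$ coordinates of the $w_i$, so integrality forces $u_d = 1$. Consequently the vertical edge meets a lattice point at every integer height, and each vertex of $S_l$ has the form $v_B + l\,u$, a lattice point.

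For the Minkowski equivalence I would apply Proposition \ref{PropMEquivWhenAllFacetsParallel} by producing a face-poset bijection between $S_l$ and $B$ matching parallel facets. Because $B$ and $T$ meet $\{x_d = l\}$ emptily for interior $l$, the facets of $S_l$ are precisely the intersections $F \cap S_l$ as $F$ ranges over the side facets of $P$; likewise the facets of $B$ are the $F \cap B$. The assignment $F \cap S_l \mapsto F \cap B$ is the desired bijection, extending to a poset isomorphism since both $S_l$ and $B$ inherit their combinatorial type from $P$. To see corresponding facets are parallel, fix a side facet $F$ and observe that for each height $c$ the horizontal slice $F \cap \{x_d = c\}$ has linear span $\lin(F) \cap \{x_d = 0\}$, because $F$ is not horizontal. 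Since $F \cap S_l = F \cap \{x_d = l\}$ and $F \cap B = F \cap \{x_d = 0\}$ (as $B \subseteq \{x_d = 0\}$), we get $\lin(F \cap S_l) = \lin(F) \cap \{x_d = 0\} = \lin(F \cap B)$, so corresponding facets pair up in parallel and Proposition \ref{PropMEquivWhenAllFacetsParallel} yields the equivalence.

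I expect the main obstacle to be the bookkeeping in the lattice-point step: one must verify that exactly one edge at $v_B$ leaves the hyperplane of $B$, and that the determinant computation genuinely pins down $u_d = 1$ rather than merely $u_d \mid h$. It is exactly this $u_d = 1$ conclusion—unavailable without smoothness—that makes the interior slices lattice polytopes, so this is where the hypothesis is essential. The parallelism step, by contrast, is purely a statement about horizontal cross-sections of the side facets and uses only the prismatoid structure.
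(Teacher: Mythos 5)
Your proposal is correct and takes essentially the same approach as the paper: smoothness forces each vertical edge to have primitive direction with last coordinate $1$ (your determinant computation fills in what the paper simply asserts), so the slice vertices $v_B + l\,u$ are lattice points, and Minkowski equivalence follows from the parallelism of corresponding facets. The only difference is cosmetic --- you verify the facet-parallelism via Proposition \ref{PropMEquivWhenAllFacetsParallel} inline, whereas the paper delegates that identical computation to Lemma \ref{LemmaTopBottomComboPrismsMinkowskiEquiv}.
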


\begin{proof}
    Let $P$ be a smooth prismatoid of dimension $d$ with top $T$ and bottom $B$ and let $n$ be the number of vertices each. For $i \in [n]$, let $b_i$ be a vertex in $B$, $t_i$ be the corresponding vertex in $T$, and $E_i$ the edge connecting $b_i$ and $t_i$. 

    Since $P$ is smooth, the primitive edge directions of $E_i$ must be $(u_1, u_2, ..., u_{d-1}, 1)$ for some integers $u_j$. Then, for each integer height $h' \in [h]$, $E_i$ intersects the hyperplane $\{(x_1, ..., x_d) : x_d = h'\}$ at the integer point $b_i + h'(u_1, u_2, ..., u_{d-1}, 1)$. Since this holds for all $E_i$, we see that every slice has only integer vertices, and so is Minkowski equivalent to the top and bottom by Lemma \ref{LemmaTopBottomComboPrismsMinkowskiEquiv}.
\end{proof}

A useful two dimensional result that does not require smoothness was proved by Hasse et al.\ in 2007.

\begin{theorem}[\cite{haase2007lattice}] \label{TheoremMEquivPolygonsAreIDP}
     Let $P$ and $P'$ be Minkowski equivalent lattice polygons. Then, $(P, P')$ is IDP.
\end{theorem}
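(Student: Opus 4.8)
The plan is to invoke the IDP Equivalence (Proposition \ref{PropIDPEquiv}): it suffices to show that for every lattice point $a \in \Z^2$, if $R_a = P \cap (a + (-P'))$ is nonempty then it contains a lattice point. The engine is the explicit description of $R_a$ afforded by Minkowski equivalence. Writing $P = \bigcap_i \{x : \langle x, n_i \rangle \le \alpha_i\}$ with primitive integer edge normals $n_i$ and integers $\alpha_i$, Minkowski equivalence lets me use the \emph{same} normals for $P'$, say $P' = \bigcap_i \{x : \langle x, n_i\rangle \le \beta_i\}$ with integers $\beta_i$. Then $a - P' = \{x : \langle x, n_i\rangle \ge \langle a, n_i\rangle - \beta_i\}$, so
$$ R_a = \{ x : \langle a, n_i\rangle - \beta_i \le \langle x, n_i\rangle \le \alpha_i \text{ for all } i\}. $$
In particular every edge of $R_a$ is parallel to an edge of $P$, and each bounding slab has integer endpoints, since $a$ is a lattice point and $\alpha_i, \beta_i \in \Z$.

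First I would dispose of the degenerate cases. If $R_a$ is a single point, then since a one-point intersection of two convex polygons must occur at a vertex of at least one of them, it is a vertex of $P$ or of $a - P'$, hence a lattice point. If $R_a$ is a segment, it lies in a single edge of each polygon, so these edges are collinear on a common lattice line; $R_a$ is then the overlap of two lattice segments, whose endpoints are lattice points.

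The crux is the full-dimensional case. The vertices of $R_a$ come in three flavors: those cut out by two upper constraints $\langle x, n_i\rangle = \alpha_i$ are vertices of $P$, and those cut out by two lower constraints are vertices of $a - P'$; both are automatically lattice points. Thus $R_a$ can fail to contain a lattice point only if \emph{every} vertex is a transverse crossing of an edge of $P$ with an edge of $a - P'$ (the ``hexagram'' configuration). To attack this I would sweep: fix an edge-normal direction $n_i$ in which $R_a$ has positive width, and consider the integer levels $t = \alpha_i, \alpha_i - 1, \dots, \langle a, n_i\rangle - \beta_i$. Each cross-section $C_t = R_a \cap \{\langle x, n_i\rangle = t\}$ is a segment on the lattice line $\{\langle x, n_i\rangle = t\}$, whose lattice points are spaced by the primitive direction $u = n_i^{\perp}$; hence $C_t$ contains a lattice point as soon as its length is at least $|u|$, one full lattice step. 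So it would remain to show that, for a suitable $n_i$, some integer-level cross-section spans a full lattice step.

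This last point is the main obstacle, and it is exactly the nontrivial content of the theorem. The dangerous configuration is an $R_a$ that is thin, of extent less than one lattice step, in every edge direction simultaneously, which geometrically is a small near-fundamental-domain region; the task is to rule this out using the integer support numbers $\alpha_i, \beta_i$ together with convexity whenever $R_a$ is two-dimensional. I expect to choose $n_i$ maximizing the integer width $\alpha_i + \beta_i - \langle a, n_i\rangle$ and to bound the cross-section lengths from below in terms of this width via convexity, reducing to a finite check. Failing a clean bound, the fallback is to assume $R_a$ is lattice-free and invoke the classification of lattice-free planar convex bodies (which have bounded lattice width), pinning $R_a$ into a narrow lattice strip and contradicting its having a genuinely two-dimensional intersection with integer slab bounds.
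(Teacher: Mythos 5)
First, a framing point: the paper does not prove this statement at all---it is imported as a black box from Haase--Nill--Paffenholz--Santos \cite{haase2007lattice}---so there is no internal proof to compare against, and your attempt has to stand on its own. Its setup is sound: reducing via Proposition \ref{PropIDPEquiv} to showing $R_a = P \cap (a+(-P'))$ contains a lattice point when nonempty, writing both polygons with a common set of primitive integer edge normals $n_i$ and integer support values $\alpha_i,\beta_i$ (valid, since Minkowski equivalence means equal normal fans and support values of lattice polygons at primitive normals are integers), disposing of the point and segment cases, and observing that any vertex of $R_a$ cut out by two constraints of the same polygon is a vertex of $P$ or of $a+(-P')$, hence a lattice point. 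This correctly isolates the hard case: $R_a$ full-dimensional with every vertex a transverse crossing of an edge of $P$ with an edge of $a+(-P')$.

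But that case is where the entire theorem lives, and you do not prove it---as you acknowledge yourself. The sweep argument is missing its engine: nothing in the proposal establishes that some integer-level cross-section of $R_a$ spans a full lattice step, and ``I expect to choose $n_i$ maximizing the integer width \dots reducing to a finite check'' is a plan, not an argument. The fallback is also not workable as stated: lattice-point-free convex bodies in the plane can have lattice width up to $1+2/\sqrt{3} > 2$ (Hurkens), so assuming $R_a$ is lattice-free only confines it to a strip of roughly that width, which does not contradict $R_a$ being full-dimensional with integer slab bounds---note that in the all-mixed-vertex configuration the width of $R_a$ in a direction $n_i$ is generally strictly smaller than the integer slab width $\alpha_i + \beta_i - \langle a, n_i\rangle$, since the minimizing vertex need not lie on the lower bounding line, so no integrality contradiction is available. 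In short, you have a correct reduction of the cited theorem to its nontrivial core, plus two unsubstantiated sketches for that core; the missing piece is precisely the content of \cite{haase2007lattice}, and the gap is genuine.
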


The analogous statement for dimensions higher than two is not true; as a counterexample, consider any pair $(P, kP)$ when $P$ is not IDP. The following lemma is a special case which we use to prove our main results.

\begin{lemma} \label{TheoremRecursiveIDP}
    Let $P$ and $P'$ be $d$-dimensional smooth prismatoids which are Minkowski equivalent. Let $S_l$ and $S_m'$ be the slices of $P$ and $P'$ respectively, and suppose that for every $l$ and $m$, the pair $(S_l, S_m')$ is IDP. Then, $(P, P')$ is IDP.
\end{lemma}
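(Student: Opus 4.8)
The plan is to prove the statement by induction on the dimension $d$, using the characterization of the IDP in Proposition~\ref{PropIDPEquiv} as the main engine. For the base case $d=2$, the prismatoids $P$ and $P'$ are Minkowski equivalent lattice polygons, so the result is immediate from Theorem~\ref{TheoremMEquivPolygonsAreIDP}. For the inductive step, I would fix a lattice point $a \in \Z^d$ and, writing $R_a = P \cap (a + (-P'))$, assume $R_a \neq \es$; by Proposition~\ref{PropIDPEquiv} it then suffices to produce a lattice point in $R_a$.

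First I would set up the slicing. Since $-P'$, and hence $a + (-P')$, again has its top and bottom parallel to $\{x_d = 0\}$, both $P$ and $a + (-P')$ are cut into horizontal slices by the hyperplanes $H_t = \{x_d = t\}$. For integer $t$, $P \cap H_t$ is a slice $S_l$ of $P$ and $(a+(-P')) \cap H_t = a + (-S'_m)$ for a slice $S'_m$ of $P'$, both lattice polytopes by Lemma~\ref{LemmaSlicesAreMEquiv}. Consequently
$$
    R_a \cap H_t = S_l \cap \big(a + (-S'_m)\big),
$$
and projecting away the (integral) last coordinate identifies this set with the set $R$ of Proposition~\ref{PropIDPEquiv} for the pair $(S_l, S'_m)$. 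Thus if $R_a$ meets some integer-height hyperplane $H_t$, then, because $(S_l, S'_m)$ is IDP by hypothesis, that slice contains a lattice point, which is the desired lattice point of $R_a$.

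The crux of the argument — and the step I expect to be the main obstacle — is therefore showing that a nonempty $R_a$ must meet an integer-height hyperplane; a priori $R_a$ could be a thin sliver wedged strictly between two consecutive integer heights $n$ and $n+1$. To rule this out I would use the separation Lemma~\ref{LemmaMEquivDisjointHyperplane} together with the parallelism forced by Minkowski equivalence. Since all slices of $P$ and $P'$ are mutually Minkowski equivalent, the slices $A(n) = P \cap H_n$ and $B(n) = (a+(-P')) \cap H_n$ are nonempty and, if $R_a$ misses height $n$, disjoint; as $A(n)$ and $-B(n)$ are Minkowski equivalent, Lemma~\ref{LemmaMEquivDisjointHyperplane} separates them by a hyperplane parallel to a facet of $A(n)$, i.e.\ in a direction $\nu$ that is the horizontal part of the outward normal of a side facet of $P$. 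The key structural input is that, because $P$ and $P'$ have the same normal fan, each side facet of $a + (-P')$ is parallel to the corresponding side facet of $P$ with the \emph{opposite} outward normal; writing the two slices in these common normal directions, the width of every resulting slab is independent of $t$. Matching this constancy against the separation at height $n$ forces that slab to have width zero, so $R_a$ lies in the common affine hull of a side facet $F$ of $P$ and the corresponding side facet $a + (-F')$ of $a + (-P')$, giving $R_a = F \cap (a + (-F'))$.

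This reduces the remaining case to the pair of $(d-1)$-dimensional smooth, Minkowski equivalent prismatoids $(F, F')$. The slices of $F$ and $F'$ are the corresponding facets of the slices $S_l$ and $S'_m$, and these are pairwise IDP because corresponding faces of an IDP pair are again IDP (a lattice point in a face $F + G$ of $S_l + S'_m$ must, in any decomposition valid for $S_l + S'_m$, have both summands lie on $F$ and $G$). Hence the inductive hypothesis applies to $(F, F')$ and, via Proposition~\ref{PropIDPEquiv} once more, yields a lattice point in $R_a = F \cap (a + (-F'))$, completing the induction. The delicate points to get right are the constant-width computation that pins the separating slab to width zero, and the verification that the facet reduction genuinely lands in a lower-dimensional instance satisfying the lemma's hypotheses.
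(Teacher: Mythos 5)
Your proposal is correct, and its engine is the same as the paper's: reduce via Proposition~\ref{PropIDPEquiv} to producing a lattice point in a nonempty $R_a$, show that $R_a$ must meet an integer-height hyperplane by combining Lemma~\ref{LemmaMEquivDisjointHyperplane} with the parallelism of corresponding side facets forced by Minkowski equivalence, and then finish with the hypothesis that slices are pairwise IDP. The genuine divergence is your ``width zero'' case and the induction on $d$ that exists only to serve it, and here you are doing unnecessary work: the proof of Lemma~\ref{LemmaMEquivDisjointHyperplane} gives \emph{strict} separation in a facet direction ($\max$ over one polytope strictly less than $\min$ over the other). Feeding strict separation at height $n$ into your (correct) constant-gap computation gives a strictly positive gap at every height, i.e.\ $P$ and $a+(-P')$ are disjoint, which directly contradicts $R_a \neq \emptyset$. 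So the sliver case is outright impossible, the slab can never have width exactly zero, and the entire facet-reduction --- the face-inheritance of IDP, the verification that $(F, F')$ is a smooth Minkowski equivalent $(d-1)$-dimensional prismatoid pair, and the induction itself --- is machinery for a vacuous case. This is exactly how the paper closes the argument in dimension $d$ with no induction: it picks the integer height to be that of the bottom facet $B$ of $P$ (a nonempty overlap of two height intervals with integer endpoints contains an integer endpoint), applies the separation lemma inside that hyperplane, and lifts the in-hyperplane separation to a hyperplane parallel to the corresponding facets $F$ and $a+(-F')$ that separates $P$ from $a+(-P')$; your constant-width computation is precisely the quantitative form of that lift. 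To be clear, nothing in your write-up is unsound --- the handling of the spurious case is itself valid (the argument that corresponding faces of an IDP pair are IDP is right, and the reduction does land in a legitimate lower-dimensional instance) --- but if you invoke the separation lemma in its strict form, your proof collapses to the paper's shorter one.
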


\begin{proof}
    Assume that the bottom $B$ of $P$ lies in $\{(x_1, ..., x_d) : x_d = 0\}$ and its top $T$ lies in $\{(x_1, ..., x_d) : x_d = t\}$ for some integer $t$, while the bottom $B'$ of $P'$ lies in $\{(x_1, ..., x_d) : x_d = b'\}$ and its top $T'$ lies in $\{(x_1, ..., x_d) : x_d = t'\}$ for integers $b' > t'$.
    
    To show that $(P, P')$ is IDP, we will use Proposition \ref{PropIDPEquiv}. Suppose that $R_a = P \cap (a + (-P'))$ is nonempty for some point $a = (a_1, ..., a_d) \in \Z^d$. We see that $a + (-P')$ has top $a + (-B)$ which is contained in $\{(x_1, ..., x_d): x_d = a_d - b' \}$ and bottom $a + (-T)$ which is contained in $\{(x_1, ..., x_d): x_d = a_d -t' \}$. In order for $R_a$ to be nonempty, at least one of $P$ or $a+(-P')$ has their top or bottom lie in a hyperplane which is between the top and bottom hyperplanes of the other. So, suppose without loss of generality that $B$ lies between the top and bottom hyperplanes of $a + (-P')$. This means that there is a slice $S_m'$ of $P'$ such that the slice $G := a +(-S_m')$ of $a + (-P')$ lies in $\{(x_1, ..., x_d): x_d = 0\}$. 
    
    Suppose towards contradiction that $B$ itself does not intersect $G$. Since $B$ and $G$ are Minkowski equivalent, Lemma \ref{LemmaMEquivDisjointHyperplane} gives that there is a $(d-2)$-dimensional plane $\Tilde{H} \sbe \{(x_1, ..., x_d): x_d = 0 \}$ which separates them and is parallel to a $(d-2)$-face $\Tilde{B}$ of $B$ and the corresponding face $\Tilde{G}$ of $G$. 

    Since $P$ is a prismatoid, $P$ has a facet $F$ such that $\Tilde{B} = B \cap F$. As $P'$ is Minkowski equivalent to $P$, it has corresponding facet $F'$ which is parallel to $F$, and further, $\Tilde{G} = G \cap (a + (-F'))$. In particular, we have that $F$ and $a + (-F')$ are parallel and not in the same hyperplane. Thus, there is a hyperplane $H$ which is parallel to both $F$ and $a + (-F')$. However, this $H$ must separate $P$ and $a + (-P')$, contradicting our supposition that $R_a$ is nonempty. Therefore, it must be that $B$ intersects $G$, as in Figure \ref{fig:slices_overlap}.

    \begin{figure}[H]
        \centering
            \resizebox{0.6\linewidth}{!}{
                \begin{tikzpicture}[scale=0.6]

    \coordinate (0) at (2, 2) {};
    \coordinate (1) at (6, 2) {};
    \coordinate (2) at (10, 6) {};
    \coordinate (3) at (4, 7) {};
    \coordinate (4) at (0, 5) {};
    \coordinate (5) at (11.75, 11) {};
    \coordinate (6) at (17.75, 11) {};
    \coordinate (7) at (5.75, 5) {};
    \coordinate (8) at (14.75, 3.5) {};
    \coordinate (9) at (20.75, 6.5) {};
    \coordinate (10) at (2.25, -0.6) {};
    \coordinate (11) at (6, -0.6) {};
    \coordinate (12) at (9.4, 2.6) {};
    \coordinate (13) at (4, 3.75) {};
    \coordinate (14) at (0.6, 1.9) {};
    \coordinate (15) at (12.15, 14) {};
    \coordinate (16) at (17.3, 14) {};
    \coordinate (17) at (6.3, 8.25) {};
    \coordinate (18) at (14.75, 7) {};
    \coordinate (19) at (20.1, 10) {};


    \coordinate (z0) at (12.4, 16.5) {};
    \coordinate (z1) at (17, 16.5) {};
    \coordinate (z2) at (6.7, 10.75) {};
    \coordinate (z3) at (14.75, 9.5) {};
    \coordinate (z4) at (19.6, 12.5) {};

    \coordinate (x0) at (1.7, 4.5) {};
    \coordinate (x1) at (6, 4.1) {};
    \coordinate (x2) at (10.5, 8.9) {};
    \coordinate (x3) at (4, 10.2) {};
    \coordinate (x4) at (-0.5, 8) {};

    \coordinate (y0) at (2.3, -1.1) {};
    \coordinate (y1) at (6, -1.1) {};
    \coordinate (y2) at (9.4, 2.2) {};
    \coordinate (y3) at (4, 3.05) {};
    \coordinate (y4) at (0.7, 0.9) {};

    

    \draw[very thick] (3) to (4);
    \draw[very thick] (4) to (0);
    \draw[very thick] (0) to (1);
    \draw[very thick] (1) to (2);
    \draw[very thick] (2) to (3);

    \draw (13) to (14);
    \draw (14) to (10);
    \draw (10) to (11);
    \draw (11) to (12);
    \draw (12) to (13);
    
    \draw[very thick] (5) to (6);
    \draw[very thick] (5) to (7);
    \draw[very thick] (7) to (8);
    \draw[very thick] (6) to (9);
    \draw[very thick] (9) to (8);

    \draw (15) to (16);
    \draw (15) to (17);
    \draw (17) to (18);
    \draw (16) to (19);
    \draw (19) to (18);



    \draw[dashed] (7) to (z2);
    \draw[dashed] (8) to (z3);
    \draw[dashed] (9) to (z4);
    \draw[dashed] (6) to (z1);
    \draw[dashed] (5) to (z0);

    \draw[fill=black,opacity=0.15] (3) to (4) to (0) to (1) to (2) to cycle;
    \draw[fill=black,opacity=0.15] (6) to (5) to (7) to (8) to (9) to cycle;

    \draw (1.2, 9) node{\huge $\bm{a + (-P')}$};
    \draw (17.5, 3.6) node{\huge $\bm P$};

    \draw (2, 5.2) node{\huge $G$};
    \draw (17, 6) node{\huge $B$};

    \draw (8, 5.5) node{\Large $B \cap G$};

    \draw[dashed] (y4) to (x4);
    \draw[dashed] (y0) to (x0);
    \draw[dashed] (y1) to (x1);
    \draw[dashed] (y2) to (x2);
    \draw[dashed] (y3) to (x3);

\end{tikzpicture}
            }
        \caption{Slices $B$ of $P$ and $G$ of $a + (-P')$.}
        \label{fig:slices_overlap}
    \end{figure}
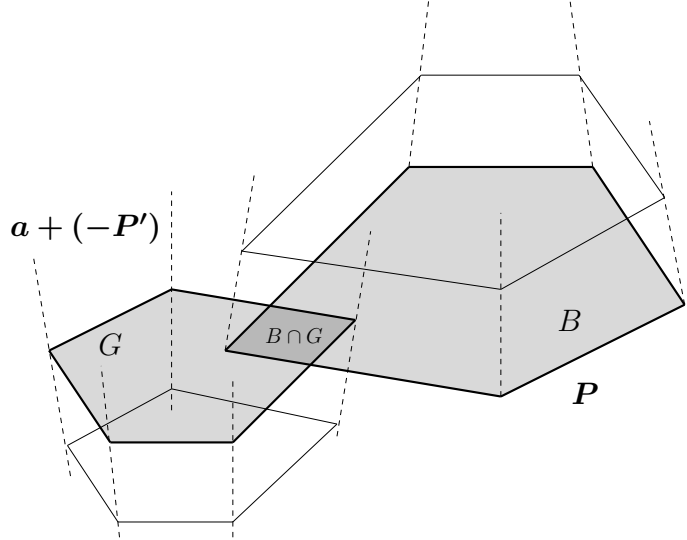

    By assumption, $(B, S_m')$ is IDP. By Proposition \ref{PropIDPEquiv}, since $B \cap G = B \cap (a +(-S_m'))$ is nonempty, it must contain a lattice point. Thus, $R_a$ contains a lattice point, so we conclude that $(P, P')$ is IDP.
\end{proof}

Using the above lemma and Theorem \ref{TheoremMEquivPolygonsAreIDP}, the following holds immediately.

\begin{theorem} \label{Theorem3DMEquivPseudoPrismsAreIDP}
    Let $P$ and $P'$ be Minkowski equivalent, smooth, 3-dimensional prismatoids. Then, $(P, P')$ is IDP.
\end{theorem}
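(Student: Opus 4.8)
The plan is to reduce the three-dimensional statement to the two-dimensional result via the slicing machinery already developed. Since $P$ and $P'$ are $3$-dimensional smooth prismatoids, Lemma \ref{LemmaSlicesAreMEquiv} tells us that every slice $S_l$ of $P$ and every slice $S_m'$ of $P'$ is a lattice polygon, and moreover that each $S_l$ is Minkowski equivalent to the bottom $B$ of $P$ while each $S_m'$ is Minkowski equivalent to the bottom $B'$ of $P'$. Lemma \ref{TheoremRecursiveIDP} then says that to conclude $(P,P')$ is IDP it suffices to show that every pair $(S_l, S_m')$ is IDP.

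The key intermediate step is to verify that the two bottoms $B$ and $B'$ are themselves Minkowski equivalent polygons. I would argue this directly from the hypothesis that $P$ and $P'$ are Minkowski equivalent: by Proposition \ref{PropMEquivWhenAllFacetsParallel} there is a bijection of their face posets under which corresponding facets are parallel. The bottom $B$ corresponds to $B'$, and each edge of $B$ is the intersection of $B$ with some lateral facet $F$ of $P$, while the corresponding edge of $B'$ is the intersection of $B'$ with the corresponding lateral facet $F'$ of $P'$. Since $F$ is parallel to $F'$ and $B$ is parallel to $B'$, the same computation as in Lemma \ref{LemmaTopBottomComboPrismsMinkowskiEquiv} shows these edges are parallel. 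Applying Proposition \ref{PropMEquivWhenAllFacetsParallel} again, now within the bottom hyperplane, gives that $B$ and $B'$ are Minkowski equivalent.

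Because Minkowski equivalence is an equivalence relation, I can then chain $S_l \sim B \sim B' \sim S_m'$ to conclude that $S_l$ and $S_m'$ are Minkowski equivalent lattice polygons. By Theorem \ref{TheoremMEquivPolygonsAreIDP}, each such pair $(S_l, S_m')$ is IDP. Feeding this into Lemma \ref{TheoremRecursiveIDP} immediately yields that $(P, P')$ is IDP.

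I do not expect any serious obstacle here, since the heavy lifting is already done by Lemma \ref{TheoremRecursiveIDP} (which handles the geometric overlap of the slices and the separating-hyperplane argument) and by Theorem \ref{TheoremMEquivPolygonsAreIDP} (which is precisely the two-dimensional IDP input). The only point requiring a little care—and the closest thing to a potential gap—is the transitivity chain of Minkowski equivalences, namely establishing that the bottoms of the \emph{two distinct} prismatoids are Minkowski equivalent to each other, rather than merely that each prismatoid's own slices are mutually equivalent. Everything else is a direct application of the cited results, which is why the statement follows almost immediately.
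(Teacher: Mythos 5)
Your proposal is correct and follows essentially the same route as the paper, which derives this theorem immediately from Lemma \ref{TheoremRecursiveIDP} together with Theorem \ref{TheoremMEquivPolygonsAreIDP}. The only difference is that you spell out the implicit step---chaining $S_l \sim B \sim B' \sim S_m'$ to see that all pairs of slices are Minkowski equivalent polygons---which the paper leaves unstated here (and makes explicit only in the cube case, Theorem \ref{TheoremTwoCubesMEquivIDP}).
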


\begin{corollary} \label{CorPrismatoid3DareIDP}
    Every smooth, 3-dimensional prismatoid is IDP.
\end{corollary}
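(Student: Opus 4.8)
The plan is to deduce this directly from Theorem \ref{Theorem3DMEquivPseudoPrismsAreIDP}. By definition, showing that a smooth 3-dimensional prismatoid $P$ is IDP amounts to showing that $(P, kP)$ is IDP for every positive integer $k$. The guiding observation is that the dilate $kP$ is \emph{itself} a smooth 3-dimensional prismatoid that is Minkowski equivalent to $P$, at which point Theorem \ref{Theorem3DMEquivPseodoPrismsAreIDP} applies to the pair $(P, kP)$ verbatim.

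First I would record that $kP$ is Minkowski equivalent to $P$: dilation by a positive scalar fixes the normal fan, so $N(kP) = N(P)$, which is exactly the definition of Minkowski equivalence. Next I would check that $kP$ remains a smooth prismatoid. Dilation is an invertible affine-linear map, so it preserves the face lattice; in particular $kP$ has the face lattice of a prism, and the parallel top and bottom facets of $P$ are carried to parallel facets of $kP$, so $kP$ is a prismatoid. For smoothness, note that dilation lengthens each edge but leaves its direction unchanged, so the primitive edge direction at each vertex of $kP$ coincides with the corresponding primitive edge direction of $P$. Since $P$ is smooth these directions form a basis of $\Z^d$, and hence so do those of $kP$, making $kP$ smooth.

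With these two facts established, Theorem \ref{Theorem3DMEquivPseudoPrismsAreIDP} applied to the Minkowski-equivalent pair of smooth 3-dimensional prismatoids $(P, kP)$ yields that $(P, kP)$ is IDP. As $k$ was an arbitrary positive integer, $P$ is IDP, which is the claim.

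I expect the only step warranting explicit care is the assertion that $kP$ is smooth, i.e.\ the invariance of the primitive edge directions under dilation; the other ingredients are immediate from the invariance of the normal fan and face lattice under dilation. That subtlety is itself mild, since a primitive lattice direction depends only on the underlying line spanned by an edge and not on the edge's length, so scaling cannot alter it.
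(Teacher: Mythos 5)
Your proposal is correct and takes essentially the same approach as the paper: the paper states this corollary as an immediate consequence of Theorem \ref{Theorem3DMEquivPseudoPrismsAreIDP}, precisely because $kP$ is a smooth $3$-dimensional prismatoid Minkowski equivalent to $P$, and you have simply made explicit the routine verifications (invariance of the normal fan, face lattice, and primitive edge directions under dilation) that the paper leaves unstated.
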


Corollary \ref{CorPrismatoid3DareIDP} was previously proved in \cite{CS23} which showed the stronger result that 3-dimensional smooth prismatoids have unimodular covers.

We can now prove the main theorem.

\begin{theorem} \label{TheoremTwoCubesMEquivIDP}
    Suppose that $C$ and $C'$ are Minkowski equivalent smooth $d$-dimensional cubes. Then, $(C, C')$ is IDP.
\end{theorem}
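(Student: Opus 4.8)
The plan is to induct on the dimension $d$. The base case $d = 2$ is immediate: a $2$-dimensional combinatorial cube is a lattice quadrilateral, so $C$ and $C'$ are Minkowski equivalent lattice polygons, and Theorem \ref{TheoremMEquivPolygonsAreIDP} gives directly that $(C, C')$ is IDP. For the inductive step I fix $d \geq 3$ and assume that every pair of Minkowski equivalent smooth $(d-1)$-dimensional cubes is IDP.

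By Theorem \ref{TheoremMain2ParallelFaces}, $C$ has a pair of parallel (opposite) facets, so $C$ is a smooth prismatoid whose top $T$ and bottom $B$ are $(d-1)$-dimensional smooth combinatorial cubes. Since $C'$ is Minkowski equivalent to $C$, Proposition \ref{PropMEquivWhenAllFacetsParallel} supplies a face-poset bijection under which the facets $T', B'$ of $C'$ corresponding to $T, B$ are parallel to $T, B$, hence parallel to each other; as $C'$ is combinatorially a cube, this exhibits $C'$ as a smooth prismatoid with top $T'$ and bottom $B'$ that are themselves $(d-1)$-dimensional smooth cubes. With $C$ and $C'$ set up as Minkowski equivalent smooth prismatoids sharing this top/bottom direction, I would invoke Lemma \ref{TheoremRecursiveIDP}, which reduces the IDP-ness of $(C, C')$ to showing that every pair of slices $(S_l, S_m')$ of $C$ and $C'$ is IDP.

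It remains to place each pair $(S_l, S_m')$ within the scope of the inductive hypothesis. First I would check that $B$ and $B'$ are Minkowski equivalent: the facets of $C$ meeting $B$ cut out the facets of $B$, and under the face-poset bijection the corresponding facets $F'$ of $C'$ are parallel to $F$ and cut out the facets of $B'$, so that $\lin(B \cap F) = \lin(B) \cap \lin(F) = \lin(B') \cap \lin(F') = \lin(B' \cap F')$ for every such pair; by Proposition \ref{PropMEquivWhenAllFacetsParallel} this makes $B$ and $B'$ Minkowski equivalent, by exactly the mechanism of Lemma \ref{LemmaTopBottomComboPrismsMinkowskiEquiv}. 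By Lemma \ref{LemmaSlicesAreMEquiv} every slice $S_l$ is Minkowski equivalent to $B$ and every slice $S_m'$ is Minkowski equivalent to $B'$, and since Minkowski equivalence is equality of normal fans it is transitive, so every $S_l$ is Minkowski equivalent to every $S_m'$. Equality of normal fans also preserves both the combinatorial cube type and smoothness, so each $S_l$ and $S_m'$ is a smooth $(d-1)$-dimensional cube. The inductive hypothesis then yields that $(S_l, S_m')$ is IDP, and Lemma \ref{TheoremRecursiveIDP} closes the induction.

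The genuinely delicate point is not any single computation but the careful assembly of these reductions: one must confirm that passing to a facet of the shared normal fan really produces \emph{Minkowski equivalent} $(d-1)$-cubes $B$ and $B'$, and that the intervening slices inherit both smoothness and the combinatorial cube structure from this equivalence, since only then do they fall squarely under the inductive hypothesis. The heavier machinery, namely the existence of parallel facets (Theorem \ref{TheoremMain2ParallelFaces}) and the slice-reduction (Lemma \ref{TheoremRecursiveIDP}), is already in hand, so the remaining work is chiefly a matter of correct bookkeeping of Minkowski equivalences.
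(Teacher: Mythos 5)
Your proposal is correct and follows essentially the same route as the paper: induction on $d$ with Theorem \ref{TheoremMEquivPolygonsAreIDP} as the base case, viewing both cubes as smooth prismatoids, applying Lemma \ref{LemmaSlicesAreMEquiv} to get that all slices are Minkowski equivalent smooth $(d-1)$-cubes, and closing with Lemma \ref{TheoremRecursiveIDP}. The only difference is that you spell out bookkeeping the paper leaves implicit (that $C'$ inherits a prismatoid structure in the same direction via Proposition \ref{PropMEquivWhenAllFacetsParallel}, and that slices of $C$ and $C'$ are Minkowski equivalent to one another), which is a faithful elaboration rather than a different argument.
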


\begin{proof}
    We proceed by induction on $d$. When $d=1$ it is trivial, and when $d = 2$, Theorem \ref{TheoremMEquivPolygonsAreIDP} provides the result, so suppose that $d > 2$ and that all pairs $(P, Q)$ of Minkowski equivalent smooth $(d-1)$-cubes are IDP.
    
    By Lemma \ref{LemmaSlicesAreMEquiv}, all slices of $C$ are Minkowski equivalent $(d-1)$-dimensional smooth cubes, and the same holds for $C'$. As $C$ and $C'$ are Minkowski equivalent to each other, it follows that every slice of $C$ is Minkowski equivalent to every slice of $C'$. Then by our inductive assumption, it must be that for every slice $S_l$ of $C$ and $S_m'$ of $C'$, the pair $(S_l, S_m')$ is IDP. Therefore, by Theorem \ref{TheoremRecursiveIDP}, $(C, C')$ is IDP.
\end{proof}

This leads us to our final result. 

\begin{corollary} \label{CorSmoothCubesAreIDP}
    Smooth combinatorial cubes of any dimension are IDP. 
\end{corollary}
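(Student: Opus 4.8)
The plan is to deduce this corollary directly from Theorem \ref{TheoremTwoCubesMEquivIDP}. Recall that a single polytope $P$ is declared IDP precisely when $(P, kP)$ is IDP for every positive integer $k$. So, fixing a smooth combinatorial cube $C$ of dimension $d$ and an arbitrary positive integer $k$, it suffices to show that the pair $(C, kC)$ is IDP. My intention is to recognize this pair as an instance of two Minkowski equivalent smooth $d$-dimensional cubes, so that Theorem \ref{TheoremTwoCubesMEquivIDP} applies verbatim.

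To that end, the key observations to record are the following. First, dilation by $k$ preserves the face poset of $C$, so $kC$ is again a combinatorial cube. Second, dilation scales only the lengths of edges, not their directions: a vertex $v$ of $C$ with incident primitive edge directions $u_1, \ldots, u_d$ corresponds to the vertex $kv$ of $kC$, whose incident edges point in the same directions $u_1, \ldots, u_d$. Hence the primitive edge directions at each vertex of $kC$ coincide with those of $C$, and since these form a lattice basis by the smoothness of $C$, the dilate $kC$ is itself smooth. Third, dilation preserves the normal fan, since the normal cone of a face depends only on directions; thus $N(kC) = N(C)$, and $C$ and $kC$ are Minkowski equivalent.

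With these facts in hand, $C$ and $kC$ are Minkowski equivalent smooth $d$-dimensional cubes, so Theorem \ref{TheoremTwoCubesMEquivIDP} gives that $(C, kC)$ is IDP. As $k$ was arbitrary, $C$ is IDP, completing the argument. There is no real obstacle at this stage: all the substantive work has been carried out in the preceding theorem, and the only point requiring a moment's care is the verification that dilation leaves the primitive edge directions unchanged, so that smoothness is inherited, while simultaneously preserving both the combinatorial type and the normal fan.
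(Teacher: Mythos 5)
Your proposal is correct and matches the paper's approach exactly: the paper deduces this corollary immediately from Theorem \ref{TheoremTwoCubesMEquivIDP}, leaving implicit the routine verification that $C$ and $kC$ are Minkowski equivalent smooth combinatorial cubes, which you have spelled out. The details you fill in (dilation preserves combinatorial type, primitive edge directions, and the normal fan) are precisely the intended justification.
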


\section{Acknowledgements}

The author is very grateful to Gaku Liu for his guidance and support during the research and writing of this paper. 

\vspace{1cm}

\printbibliography

@article {2019BecksmoothCS3D,
    AUTHOR = {Beck, Matthias and Haase, Christian and Higashitani, Akihiro
              and Hofscheier, Johannes and Jochemko, Katharina and
              Katth\"{a}n, Lukas and Micha\l ek, Mateusz},
     TITLE = {Smooth centrally symmetric polytopes in dimension 3 are {IDP}},
   JOURNAL = {Ann. Comb.},
  FJOURNAL = {Annals of Combinatorics},
    VOLUME = {23},
      YEAR = {2019},
    NUMBER = {2},
     PAGES = {255--262},
      ISSN = {0218-0006,0219-3094},
   MRCLASS = {52B20 (52B10 52B12)},
  MRNUMBER = {3962856},
MRREVIEWER = {Robert\ Davis},
       DOI = {10.1007/s00026-019-00418-x},
       URL = {https://doi.org/10.1007/s00026-019-00418-x},
}

@article{oda2008problems,
  title={Problems on Minkowski sums of convex lattice polytopes},
  author={Oda, Tadao},
  journal={arXiv preprint arXiv:0812.1418},
  year={2008}
}

@article{haase2007lattice,
  title={Lattice points in Minkowski sums},
  author={Haase, Christian and Nill, Benjamin and Paffenholz, Andreas and Santos, Francisco},
  journal={arXiv preprint arXiv:0711.4393},
  year={2007}
}

@article{caratheodory1911,
    title={{\"U}ber den Variabilit{\"a}tsbereich der Fourier'schen Konstanten von positiven harmonischen Funktionen},
    author={Carath{\'e}odory, Constantin},
    journal={Rendiconti Del Circolo Matematico di Palermo (1884-1940)},
    volume={32},
    number={1},
    pages={193--217},
    year={1911},
    publisher={Springer}
}

@article{Bogart_2015,
   title={Finitely many smooth d-polytopes with n lattice points},
   volume={207},
   ISSN={1565-8511},
   url={http://dx.doi.org/10.1007/s11856-015-1175-7},
   DOI={10.1007/s11856-015-1175-7},
   number={1},
   journal={Israel Journal of Mathematics},
   publisher={Springer Science and Business Media LLC},
   author={Bogart, Tristram and Haase, Christian and Hering, Milena and Lorenz, Benjamin and Nill, Benjamin and Paffenholz, Andreas and Rote, G{\"u}nter and Santos, Francisco and Schenck, Hal},
   year={2015}, 
   month=mar, 
   pages={301-329} 
}

@article{Lundman_2012,
   title={A classification of smooth convex 3-polytopes with at most 16 lattice points},
   volume={37},
   ISSN={1572-9192},
   url={http://dx.doi.org/10.1007/s10801-012-0363-3},
   DOI={10.1007/s10801-012-0363-3},
   number={1},
   journal={Journal of Algebraic Combinatorics},
   publisher={Springer Science and Business Media LLC},
   author={Lundman, Anders},
   year={2012},
   month=mar, 
   pages={139-165} 
}

@article{koelman1993generators,
  title={Generators for the ideal of a projectively embedded toric surface},
  author={Koelman, Robert Jan},
  journal={Tohoku Mathematical Journal, Second Series},
  volume={45},
  number={3},
  pages={385--392},
  year={1993},
  publisher={Mathematical Institute, Tohoku University}
}

@book{cox2011toric,
  title={Toric varieties},
  author={Cox, David A and Little, John B and Schenck, Henry K},
  volume={124},
  year={2011},
  publisher={American Mathematical Soc.}
}

@article{viet1997normal,
  title={Normal polytopes, triangulations, and Koszul algebras.},
  author={Vi{\^e}t, N and Gubeladze, Joseph and Bruns, W},
  year={1997},
  publisher={Walter de Gruyter, Berlin/New York Berlin, New York},
  journal = {Journal f{\"u}r die reine und angewandte Mathematik}
}

@article{hibi2008mini,
  title={Mini-workshop: Projective normality of smooth toric varieties},
  author={Hibi, Takayuki and Haase, Christian and Maclagan, Diane},
  journal={Oberwolfach reports},
  volume={4},
  number={3},
  pages={2283--2320},
  year={2008}
}

@article{gubeladze2012convex,
  title={Convex normality of rational polytopes with long edges},
  author={Gubeladze, Joseph},
  journal={Advances in Mathematics},
  volume={230},
  number={1},
  pages={372--389},
  year={2012},
  publisher={Elsevier}
}

@article{firla1999hilbert,
  title={Hilbert bases, unimodular triangulations, and binary covers of rational polyhedral cones},
  author={Firla, Robert T and Ziegler, G{\"u}nter M},
  journal={Discrete \& Computational Geometry},
  volume={2},
  number={21},
  pages={205--216},
  year={1999}
}

@inproceedings{haase2010generating,
  title={Generating smooth lattice polytopes},
  author={Haase, Christian and Lorenz, Benjamin and Paffenholz, Andreas},
  booktitle={Mathematical Software--ICMS 2010: Third International Congress on Mathematical Software, Kobe, Japan, September 13-17, 2010. Proceedings 3},
  pages={315--328},
  year={2010},
  organization={Springer}
}

@article{CS23,
   title={Unimodular covers of \(3\)-dimensional parallelepipeds and Cayley sums},
   volume={3},
   ISSN={2766-1334},
   url={http://dx.doi.org/10.5070/C63362785},
   DOI={10.5070/c63362785},
   number={3},
   journal={Combinatorial Theory},
   publisher={California Digital Library (CDL)},
   author={Codenotti, Giulia and Santos, Francisco},
   year={2023},
   month=dec }

@article{Haase_2021,
  title={Existence of unimodular triangulations - positive results},
  author={Haase, Christian and Paffenholz, Andreas and Piechnik, Lindsey and Santos, Francisco},
  journal={Memoirs of the American Mathematical Society},
  year={2021},
  ISSN={1947-6221},
  url={http://dx.doi.org/10.1090/memo/1321},
  DOI={10.1090/memo/1321},
  number={1321},
  publisher={American Mathematical Society (AMS)},
  month=mar,
  volume={270}
}

\end{document}